\definecolor{verde}{rgb}{0.0, 0.5,0.0}
\newtheorem{theorem}{Theorem}[section]
\theoremstyle{definition}
\newtheorem{remark}{Remark}[section]
\theoremstyle{definition}
\newtheorem{lemma}{Lemma}[section]
\newcommand\bb {\mathbf b}
\newcommand\bx {\mathbf x}
\newcommand\bA {\mathbf A}
\newcommand\bB {\mathbf B}
\newcommand\bH {\mathbf H}
\newcommand\indica {\mathbb{I}}
\newcommand\wc {\widehat{{c}}}
\newcommand\wbH {\widehat{\bH}}
\newcommand\itA {{\mathcal{A}}}
\newcommand\itF {{\mathcal{F}}}
\newcommand\itG {{\mathcal{G}}}
\newcommand\itH {{\mathcal{H}}}
\newcommand\itI {{\mathcal{I}}}
\newcommand\itK {{\mathcal{K}}}
\newcommand\itL {{\mathcal{L}}}
\newcommand\itM {{\mathcal{M}}}
\newcommand\itN {{\mathcal{N}}}
\newcommand\itS {{\mathcal{S}}}
\newcommand\itT {{\mathcal{T}}}
\newcommand\itU {{\mathcal{U}}}
\newcommand\itV {{\mathcal{V}}}
\newcommand\bbe {\mbox{\boldmath $\beta$}}
\newcommand\bbech {\mbox{\scriptsize${\bbe}$}}
\newcommand\betta {\mbox{\boldmath $\eta$}}
\newcommand\bgama {\mbox{\boldmath $\gamma$}}
\newcommand\bla {\mbox{\boldmath $\lambda$}}
\newcommand\blach {\mbox{\scriptsize\boldmath $\lambda$}}
\newcommand\blachch {\mbox{\tiny\boldmath $\lambda$}}
\newcommand\bmu {\mbox{\boldmath $\mu$}}
\newcommand\bnabla {\mbox{\boldmath $\nabla$}}
\newcommand\bnuch {\mbox{\scriptsize\boldmath $\nu$}}
\newcommand\bthe {\mbox{\boldmath $\theta$}}
\newcommand\bthech {\mbox{\footnotesize\boldmath $\theta$}}
\newcommand\bSi {\mbox{\boldmath $\Sigma$}}
\newcommand\walfa {\widehat{\alpha}}
\newcommand\wbeta {\widehat{\beta}}
\newcommand\wbbe {\widehat{\bbe}}
\newcommand\weta {\widehat{\eta}}
\newcommand\wkappa {\widehat{\kappa}}
\newcommand\wlam {\widehat{\lambda}}
\newcommand\wblam {\widehat{\bla}}
\newcommand\wmu {\widehat{\mu}}
\newcommand\wbnabla {\widehat{\bnabla}}
\newcommand\wsigma {\widehat{\sigma}}
\newcommand\wtheta {\widehat{\theta}}
\newcommand\wbthe {\widehat{\bthe}}
\newcommand\wtau {\widehat{\tau}}
\newcommand\wtbbe {\widetilde{\bbe}}
\newcommand\wtbthe {\widetilde{\bthe}}
\newcommand\wteta {\widetilde{\eta}}
\newcommand\wtlam {\widetilde{\lambda}}
\newcommand\wtblam {\widetilde{\bla}}
\newcommand\wtbthech {\widetilde{\bthech}}
\def\real{\mathbb{R}}
\def\natu{\mathbb{N}}
\def\qu{\mathbb{Q}}
\newcommand{\EME}{\mathbb{M}}
\newcommand{\esp}{\mathbb{E}}
\newcommand{\prob}{\mathbb{P}}
\newcommand{\var}{\mbox{\sc Var}}
\newcommand{\as}{\mbox{\footnotesize a.s.}}
\newcommand{\convpp}{ \buildrel{a.s.}\over\longrightarrow}
\newcommand{\convprob  }{ \buildrel{p}\over\longrightarrow}
\newcommand{\trasp}{^{\mbox{\footnotesize \sc t}}}
\newcommand\bcero {{\bf{0}}}
\def\dst{\displaystyle}
\def\median{\mathop{\mbox{median}}}
\def\mad{\mathop{\mbox{\sc mad}}}
\def\argmin{\mathop{\mbox{argmin}}}
\def\sd{\mathop{\rm SD}}
\def\MSE{\mathop{\rm MSE}}
\def\MISE{\mathop{\rm MISE}}
\newcommand{\identidad}{\mbox{\bf I}}
\newcommand\noi{\noindent}
\def\dst{\displaystyle}
\def\square{\ifmmode\sqr\else{$\sqr$}\fi}
\def\sqr{\vcenter{
         \hrule height.1mm
         \hbox{\vrule width.1mm height2.2mm\kern2.18mm
\vrule width.1mm}
         \hrule height.1mm}}
\newcommand{\eme}{\mbox{\scriptsize \sc m}}
\newcommand{\tuk}{\mbox{\scriptsize \sc t}}
\newcommand{\rob}{\mbox{\footnotesize \sc r}}
\newcommand\gm {\mbox{\footnotesize    \sc gm}}
\newcommand{\cl}{\mbox{\scriptsize \sc cl}}
\definecolor{verde}{rgb}{0.0, 0.5,0.0}
\begin{document}
 
\title{Robust estimators in a generalized partly linear regression model under monotony constraints}
\author{Graciela Boente$^1$, Daniela Rodriguez$^1$ and Pablo Vena$^1$ \\
 \footnotesize $^1$ Facultad de Ciencias Exactas y Naturales, Universidad de Buenos Aires and CONICET}
\date{}
\maketitle

\begin{abstract}
 In this paper, we consider the situation in which the observations follow an  isotonic generalized partly linear model. Under this model, the mean  of the responses is modelled, through a link function, linearly on   some covariates and nonparametrically on an univariate regressor in   such a way that the nonparametric component is assumed to be a   monotone function. A class of robust estimates for the monotone   nonparametric component and for the regression parameter, related   to the linear one, is defined. The robust estimators are based on a   spline approach combined with a score function which bounds large
  values of the deviance. As an application, we consider   the isotonic partly linear log--Gamma regression  model.   Through a Monte Carlo study, we investigate the performance of the proposed  estimators under a partly linear log--Gamma regression  model with increasing nonparametric component.

\end{abstract}

\newpage

\normalsize
\section{Introduction}\label{intro}

As is well known, semiparametric models may be introduced when the linear model is insufficient to explain the relationship between the response variable and its associated covariates.  This approach has been used to extend generalized linear models to allow most predictors to be modelled linearly while one or a small number of them enter the model nonparametrically. In this paper, we deal with observations $(y_i,\bx_i\trasp, t_i)\trasp$ satisfying  a semiparametric generalized partially linear model, denoted \textsc{gplm}. To be more precise, we assume that  $y_i|(\bx_i,t_i)\sim F(.,\mu_i,\kappa_0)$ where
$\var(y_i|(\bx_i,t_i))=A^2(\kappa_0)\, V^2(\mu_i)$,  with $A$ and  $V$  known functions and $\mu_i=\esp(y_i|(\bx_i,t_i))=\mu\left(\bx_i,t_i\right)$ is such that
\begin{equation}
  \mu\left(\bx,t\right)=H\left( \bx\trasp \bbe_0+\eta_0(t)\right)\,,
  \label{eq:GPLM}
\end{equation}
where $H^{-1}$ is a known link function, $\bbe_0\in \real^p$ is an unknown parameter and $\eta_0$ is an unknown continuous function with support on a compact interval $\itI$, which  we will assume equal to $[0,1]$, without loss of generality. The parameter $\kappa_0$ which is usually a nuisance parameter, generally lies on a subset of $\real$, for that reason we will assume that $\kappa_0\in\itK$, where $\itK \subset \real$ stands for an open set.

When $H(t)=t$, the generalized partially linear model is simply the well known partly linear regression model, that has been considerably studied, and, in this case, $\kappa_0$ is the  scale parameter. We refer for instance to H\"ardle \textsl{et al.} (2000). Robust estimators for \textsc{gplm} have been considered for instance by Boente \textsl{et al.} (2006) and by Boente and Rodr\'iguez (2010). However, in this paper, we deal with the situation in which
there are constraints on the nonparametric component $\eta_0$. More precisely, we will assume that   $\eta_0$, in model \eqref{eq:GPLM}, is monotone and for simplicity and without loss of generality non--decreasing. Most studies on generalized partly linear models assume that $\eta_0$ is an unspecified smooth function. However, in many applications, monotonicity is a property of the function to be fitted. Some examples when $\bbe_0=\bcero$ can be found for instance in Ramsay (1988) who studied the relation between the incidence of Down's syndrome and the mother's age; see also He and Shi (1998). In   Section  \ref{cost}, we analyse a data set considered in Marazzi and Yohai (2004) which aims to study the relationship between the hospital cost of stay and several explanatory variables, including the length of stay in days which we model  non--parametrically. The monotone assumption on $\eta_0$ is natural in this data set,  since the hospital cost increases  the longer the stay.

 Most estimation developments under monotone constraints were given under a partly linear regression model and we can mention among others, Huang (2002), Sun \textsl{et al.}
(2012) who considered estimation under constraints and also Lu (2010) who proposed a sieve maximum likelihood estimator based on $B-$splines. Recently, Lu (2015) considered a spline approach to generalized monotone partial linear models. All these methods are sensitive to outliers and some developments were given under a regression model, that is, when $H(t)=t$ to provide robust estimators. For nonparametric isotonic regression models, He and Shi (1998) and Wang and Huang (2002) proposed a robust isotonic estimate procedure based on the median regression, while, to improve the efficiency,  \'Alvarez and Yohai (2012) considered $M-$estimators for isotonic regression. On the other hand, under a partly linear regression model and following the approach given by  Lu (2010),  Du \textsl{et al.} (2013) consider $M-$estimators based on monotone $B-$splines  when $\eta_0$ is assumed to be a monotone function, the scale parameter is known and the errors have a symmetric distribution.  However, in the hospital data set to be considered in Section  \ref{cost}, the errors follow an asymmetric log--Gamma distribution and the proposal considered in Du \textsl{et al.} (2013) is not appropriate. Furthermore, the shape parameter is unknown and needs to be estimated in order to calibrate the robust estimators and to downweight large residuals. 

In this paper, we provide a general setting   to provide a family of estimators for the regression parameter $\bbe_0$ and the monotone regression function $\eta_0$ under the \textsc{gplm} model \eqref{eq:GPLM} when the nuisance parameter is unknown. This model includes a partly linear isotonic regression model with unknown scale and a  partly linear isotonic log--Gamma regression model with unknown shape parameter, as particular cases. In this sense, we generalize the proposal given in Du \textsl{et al.} (2013) by considering a preliminary scale estimator. The paper is organized as follows. Section \ref{proposal} described the proposed robust estimators. In particular, since our approach is based on $B-$splines,  a data--driven robust selection method for the knots is described. Consistency and rates of convergence for the proposed estimators are given in Section \ref{consistency}. The particular case of the log--Gamma model is considered in Section \ref{gamacaso}, while in Section \ref{montecarlo}, a  numerical study is carried out  to examine the small sample properties of the proposed procedures. An application to a real data set is provided in Section \ref{cost},  while concluding remarks are given in Section \ref{sec:comment}.     Some comments regarding the Fisher--consistency of the proposed estimators are given in Appendix A, while the proofs of the main results are relegated to Appendix B.

\section{The robust estimators}{\label{proposal}}

Let $w:\real^p \to \real$ be a weight function to control leverage points on the carriers $\bx$ and $\rho:\real^2\to\real$ a loss function. Define the functions
\begin{eqnarray}
  L_n(\bbe,g,a) &= & \frac 1n \sum_{i=1}^n   \rho\left(y_i,\bx_i\trasp\bbe+g(t_i),a\right)  w(\bx_i) 
  \label{eq:objsample}\\
  L(\bbe,g,a) &= & \esp   \rho\left(y_1,\bx_1\trasp\bbe+g(t_1),a\right)   w(\bx_1) \;.
  \label{eq:objfunction}
\end{eqnarray}
As in Lu (2010, 2015) and  Du \textsl{et al.} (2013),  consider $\itT_n=\{t_i\}_{i=1}^{m_n+2\ell}$ where $0=t_1=\dots=t_\ell< t_{\ell+1}< \dots< t_{m_n+\ell+1}= \dots= t_{m_n+2\ell}=1$ is a sequence of knots that partition the closed interval $[0, 1]$ into $m_n+1$ subintervals $\itI_i = [t_{l+i}, t_{l+i+1})$, for $i = 0, \dots ,m_n-1$ and $\itI_{m_n} = [t_{m_n+\ell}, t_{m_n+\ell+1}]$. 

Denote as  $\itS_n(\itT_n,\ell)$  the class of splines of order $\ell>1$ with knots  $\itT_n$. According to Corollary 4.10 of Schumaker (1981), for any $g\in\itS_n(\itT_n,\ell)$, there exist a class of $B-$spline basis functions  $\{B_j: 1\leq j\leq k_n\}$, with $k_n=m_n+\ell$, such that $g=\sum_{j=1}^{k_n}\lambda_jB_j$. Furthermore, according to Theorem 5.9 of Schumaker (1981), the spline $g$ is monotonically nondecreasing on $[0, 1]$ if nondecreasing constraints are imposed on the coefficients $\bla=(\lambda_1,\dots,\lambda_{k_n})\trasp$, i.e., when $\lambda_1\le \dots\le \lambda_{k_n}$.

Therefore, we can define a   collection of monotone non-decreasing splines on $[0,1]$, $\itM_n({\cal T}_n,\ell)$, which is a subclass ${\cal S}_n({\cal T}_n,\ell)$, through 
$$ \itM_n(\itT_n,\ell)=\left\{\sum_{i=j}^{k_n}\lambda_j B_j:   \quad\lambda_1\leq \dots \leq \lambda_{k_n}\right\} \,, $$
where the non-decreasing constraints are imposed on the coefficients to guarantee monotonicity. Hence, the function $\eta_0$ can be approximated as
$\eta(t)\approx \bla\trasp\bB(t)$ with  $\bB(t)=(B_1(t), \dots, B_{k_n}(t))\trasp$ the vector of $B-$spline basis functions, $\bla=(\lambda_1,\dots,\lambda_{k_n})\trasp$ the spline coefficient vector such that $\bla\trasp\bB\in \itM_n(\itT_n,\ell)$.

This suggests that estimators of $(\bbe_0,\eta_0)$ may be obtained minimizing $L_n(\bbe,g, \wkappa)$ over $\bbe\in \real^p$ and  $g\in \itM_n(\itT_n,\ell)$, where $\wkappa$ is a robust consistent estimator of $\kappa_0$, for instance, previously computed without the monotonicity constraint.  More precisely, the estimators $(\wbbe,\weta)=(\wbbe, \sum_{j=1}^{k_n}\wlam_jB_j)=(\wbbe,\wblam\trasp \bB)$ are defined through the values  $(\wbbe,\wblam)$ such that
\begin{equation}
  (\wbbe,\wblam) = \argmin_{\bbech \in \real^p,\blach\in \itL_{k_n}} L_n\left(\bbe,\sum_{j=1}^{k_n}\lambda_j\,B_j, \wkappa\right) \,,
  \label{eq:estimadores}
\end{equation}
where $\itL_{k_n}=\{\bla\in \real^{k_n}: \lambda_1\leq \dots\leq \lambda_{k_n}\}$. If we denote $\bB_i = \left(B_1(t_i),\dots,B_{k_n}(t_i) \right)$, we have that
\begin{equation}
  (\wbbe,\wblam)  =  \argmin_{\bbech\in \real^p,\blach\in \itL_{k_n}}  \frac 1n \sum_{i=1}^n  \rho\left(y_i,\bx_i\trasp \bbe + \bB_i\trasp
    \bla, \wkappa \right) w(\bx_i) \;.
  \label{eq:minim}
\end{equation}
Let $\itG= \{g: g \mbox{ is  a monotonically nondecreasing function on } [0, 1]\}$.   Throughout the paper, we will assume Fisher--consistency, i.e., 
\begin{equation}
L(\bbe_0,\eta_0,\kappa_0)=\dst\min_{\bbech\in \real^p, g\in \itG}L(\bbe,g,\kappa_0)\,,
\label{fisher}
\end{equation}  
with $(\bbe_0, \eta_0)$ being the unique minimum, that is, $L(\bbe_0,\eta_0,\kappa_0)<L(\bbe,g,\kappa_0)$ for any $(\bbe,g)\in\real^p\times \itG$, $(\bbe,g)\ne  (\bbe_0,\eta_0)$. This is a usual condition in robustness and it states that our target are indeed the true parameters of the model. A similar condition  for generalized linear models was required in Bianco \textsl{et al.} (2013a) and for generalized partial linear models in Boente \textsl{et al.} (2006) and Boente and Rodr\'{\i}guez (2010) who provide conditions ensuring that  $L(\bbe_0,\eta_0,\kappa_0)=\dst\min_{\bbech\in \real^p, g\in \itG}L(\bbe,\eta,\kappa_0)$.

\begin{remark}{\label{rem:rem1}}
As mentioned in Lu (2015), if $\lambda_1\le \dots\le \lambda_{k_n}$, the  function $g=\sum_{j=1}^{k_n}\lambda_jB_j$ is non--decreasing, but the linear inequality constraint on   the coefficients   is not a necessary condition. However, for quadratic $B-$splines, the coefficients condition is sufficient and     necessary for monotonicity.
\end{remark}

\subsection{The loss function}  \label{sec:lossfunctions}
Under a fully parametric generalized linear model, the selected loss function $\rho$ aims to bound either large values of   the deviance or of the Pearson   residuals. We refer to   Bianco and Yohai (1996), Croux and Haesbroeck (2003), Bianco \textsl{et al.} (2005) and  Cantoni and Ronchetti (2001), where different choices for the loss function are given. On the other hand,  optimally bounded score functions have been studied in Stefanski \textsl{et al.} (1986).   We briefly remind the definition of the family which bounds the deviance  which is the function used in our simulation study, for more details see, for instance, Boente \textsl{et al.} (2006) who   considered this family of loss functions to estimate the parameters of a generalized partial linear model using a profile--kernel approach. 

Let $\varphi_a$ be a bounded non--decreasing function with continuous derivative $\varphi_a^{\prime}$, $a$ being the tuning constant. Typically, $\varphi_a$ is a function performing like the identity function in a neighbourhood of 0 but bounding large values of the deviance.    Denote as  $f(\cdot,s)$   the density of the   distribution function $F(\cdot,s)$ with $y|(\bx,t)\sim F\left(\cdot, H\left (\eta (t) +\bx\trasp\bbe\right)\right)$. In this setting, the  \textsl{robust deviance--based  estimator} are related to the following choice for the function $ \rho(y,u,a)$
\begin{equation}
 \rho(y,u,a) = \varphi_a [-\log \,f(y,H(u)) + \log \,f(y,y) ] + G_a(H(u))\;.
\label{eq:rhobia}
\end{equation}
The correction term $G_a$     is given by
$$
G_a^{\prime}(s) =
\esp_s\left (\varphi_a^{\prime} [-\log \,f(y,s) +  \log  f(y,y) ] \,\frac{f^{\prime}(y,s)}{f(y,s)}\right)\;,
 $$
where $\esp_s$ indicates expectation taken under $y\sim F(\cdot,s)$ and $f^{\prime}(y,s)$ is a shorthand for $\partial\,f(y,s)/{\partial s}$.    It is worth noticing that  $\varphi_a(s)=s$,  $G_a(u)=0$ and $w\equiv 1$ when considering  the maximum likelihood estimator, under a generalized linear model.  
For a general function $\varphi_a$,  the correction factor  is included to guarantee Fisher--consistency under the true model,  as for generalized linear models. If the correction factor is taken equal to $0$, the results stated in Section   \ref{consistency} only ensure that the estimators will be consistent to the minimizer $(\bbe_F,\eta_F)$ of $L(\bbe,g,\kappa_0)$,  where $L(\bbe,g,a)$ is defined in \eqref{eq:objfunction}.  However, as discussed in Bianco \textsl{et al.} (2005), when considering a continuous family of distributions with strongly unimodal density function, the
correction term $G_a$ can be avoided. In this case, $\kappa_0$ may play  the role of the tuning constant. For instance, for the Gamma distribution, the tuning constant depends on the shape parameter so, if the shape is unknown, initial estimators need to be considered. Further details are given in Section \ref{gamacaso}. 

Note that for the Poisson and logistic regression models, we have   $\kappa_0= 1$, so $\kappa_0$ does not need to be estimated, hence $\varphi_a(s)=\varphi(s)$. Furthermore, as noted by  Croux and Haesbroeck (2003) for the logistic model,  in order to guarantee existence of solution, beyond the overlapping condition required for the maximum likelihood estimator, the derivative $\varphi^{\prime}$ of the function   $ \varphi(s)$ must satisfy additional constraints. More precisely,  $\varphi^{\prime}$ needs to be   increasing on $(-\infty, A_0]$ and decreasing on $[A_0, +\infty)$ for some $A_0>0$ or   increasing on $\real$ and also to fulfil that $\lim_{s\to +\infty} \varphi^{\prime}(st)/\varphi^{\prime}(-s)=\infty$ for any $t>0$. An example of function $\varphi$ satisfying these conditions is also given therein.
 
On the other hand, when $H(u)=u$, the usual square loss function  is replaced by a $\rho-$function after scaling the residuals to control the effect of large responses.  More precisely, let   $\phi : \real \to [0, \infty)$ stands for $\rho-$function as defined in Maronna \textsl{et al.}  (2006), i.e., an even continuous, non-decreasing function with $\phi(0)=0$ and such that $\phi(u)<\phi(v)$ when $0 \leq u< v$ with $\phi(v) < \sup_s \phi(s)$. Then, when the link function equals to identity function  $\rho(y, u,a)=\phi((y-u)/a)$ and, as mentioned in the Introduction, $\kappa_0$ plays the role of the scale parameter.

\begin{remark}
\begin{enumerate} 
\item[a)] As noted in Boente \textsl{et al.} (2006), under a logistic partially linear regression model, Fisher--consistency can easily be derived for the loss function given by \eqref{eq:rhobia}, when $\varphi$  satisfies the regularity conditions stated in Bianco and Yohai (1996), $w(\bx)>0$, for all $\bx$, and 
\begin{equation}
\prob\left(\bx\trasp\bbe=a_0|t=t_0\right)<1, \qquad \forall  (\bbe,a_0)\ne 0 \mbox{ and for almost all $t_0$}.
\label{eq:identifia}
\end{equation}
  Moreover, taking conditional expectations with respect to $(\bx,t)$, it is easy to verify that $(\bbe_0, \eta_0)$ is the unique minimizer of
  $L(\bbe, g,\kappa_0)$ in this case.  Condition \eqref{eq:identifia} does not allow $\bbe_0$ to include an intercept, so that the model will be   identifiable. 

\item[b)] Under a generalized partially linear model with responses having a gamma distribution, Theorem 1 of Bianco \textsl{et al.} (2005) allows us to derive Fisher--consistency for the nonparametric and parametric components, if the score function  is bounded and strictly increasing on the set where it is not constant and if \eqref{eq:identifia} holds (see  Section \ref{gamacaso}).

\item[c)] Finally, consider  the  partially  linear model   $y_i=\bx_i\trasp\bbe_0+\eta_0(t_i)+\epsilon_i$ where $\epsilon_i$ are independent of $(\bx_i,t_i)$, that is,  the link function equals $H(u)=u$. In this case, Fisher--consistency   holds if, for instance, the errors $\epsilon_i$ have a symmetric distribution with density strictly unimodal, the loss function  equals  $\rho(y, u,a)=\phi((y-u)/a)$  with $\phi$ a $\rho-$function as defined in
 Maronna \textsl{et al.}  (2006), i.e., an even continuous,
 non-decreasing function with $\phi(0)=0$ and such that
 $\phi(u)<\phi(v)$ when $0 \leq u< v$ with $\phi(v) < \sup_s
 \phi(s)$.  Furthermore, we also have that $L(\bbe_0,\eta_0,a)= \min_{\bbech\in \real^p, g\in \itG}L(\bbe,g,a)$, for any $a>0$,  see   Appendix A for a proof. 
\end{enumerate}
\end{remark}

\subsection{Selection of $k_n$}{\label{sec:BIC}}
A remaining question is the choice of the number knots and their location for the space of $B-$splines. Knot selection is more important for the estimate of
$\eta_0$ than for the estimate of $\bbe_0$.
One approach is to use uniform knots which is the approach followed in our simulation study. Uniform knots are usually sufficient when the function $\eta_0$ does not exhibit dramatic changes in its derivatives. On the other hand, non--uniform knots are desirable when the function has very different local behaviours in different regions. Another commonly used approach is to consider as knots quantiles of the observed $t_i$ with uniform percentile ranks. 

The number of knots $m_n$ or equivalently the number of elements of the basis (recall that $k_n = m_n + \ell$) may be determined by a
model selection criterion. Suppose that $ (\wbbe^{(k)},\wblam^{(k)})$ is the estimator solution of \eqref{eq:estimadores} with a $k-$dimensional
spline space.  As in He and Shi (1996) and He \textsl{et al.} (2002), for each $k$ define a criterion analogous to Schwartz (1978) information criterion
$$BIC(k) =  \frac 1n \sum_{i=1}^n \rho\left(y_i,\bx_i\trasp\wbbe^{(k)}+\sum_{j=1}^{k}\wlam_j^{(k)}\,B_j(t_i), \wkappa\right)
  w(\bx_i)   +\frac{\log n}{2\,n}(k+p)\,.$$  
Large values of $BIC$ indicate poor fits. A robust version of the Akaike criterion considered in Lu (2015) can also be considered. As is usual in spline--based procedures the number of knots should increase slowly with the sample
size $n$ to attain an  optimal rate of convergence. When it is assumed that $\eta$ is twice continuously
differentiable and  cubic splines ($\ell=3$) are considered, as in our simulation study, according to the convergence rate derived in Theorem \ref{thm:rate}, a possible criterion is to search for the first  (i.e. smallest $k$) local
minimum of $BIC(k)$ in the range of $\max(n^{1/5}/2, 4) \le k\le 8 + 2\, n^{1/5}$. Within this range, there is usually only one local minimum. The reason for $k$  being larger than $4$ is that for cubic splines the smallest possible choice is $4$. Also note that the global minimum of $BIC(k)$ actually occurs at a saturated model in which $k = n-p$, so $BIC(k)$ is a valid criterion only for a limited range of $k$.

\section{Consistency}\label{consistency}
In this section, we will derive, under some regularity conditions,  consistency and rates of convergence for the estimators defined in the previous Section. We will
begin by fixing some notation.  Let $\|\cdot\|$ the Euclidean norm of $\real^p$ and $\| f\|^2_2=\left(\esp f^2(t_1)\right)^{1/2}$.   For any continuous function
$v:\real\to \real$ denote $\|v\|_{\infty}= \sup_{t}|v(t)|$ and $\itG=\{g: g \text{ is  a monotonically nondecreasing function on } [0,
1]\}$.  From now on, $\itV$ stands for a neighbourhood of  $\kappa_0$ with closure  $\overline{\itV}$ strictly included in $ \itK$ and $\itF_n$ will denote the family of functions
$$\itF_n=\{f(y,\bx,t)=\rho\left(y,\bx\trasp\bbe+\bla\trasp\bB(t), a\right)w(\bx), \bbe \in \real^p, \bla\in \itL_{k_n}, a\in  \itV  \}\,.$$
Furthermore, for any measure $Q$, $N(\epsilon, \itF_n, L_s(Q))$ and $N_{[\;]}(\epsilon, \itF_n, L_s(Q))$ stand  for the covering  and bracketing  numbers  of the class $\itF_n$ with respect to the distance in $ L_s(Q)$, as defined, for instance, in van der Vaart and Wellner (1996).

\subsection{Consistency results}
 To derive the consistency of our proposal in the general framework we are considering, we will need the following set of assumptions whose validity is discussed in Remark \ref{rem:supC0}.

\begin{itemize}
\item[\textbf{C0.}] The estimators $\wkappa$ of $\kappa_0$ are strongly consistent.
\item[\textbf{C1.}]    $\rho(y,u,a)$ and $w( \cdot )$ are non--negative and  bounded functions and  $\rho(y,u,a)$ is a continuous function. Moreover,   $L^{\star}(\bbe, \bla, a)=L(\bbe, \sum_{j=1}^{k_n}\lambda_jB_j, a)$ satisfies the following equicontinuity condition:  for any $\epsilon>0$ there exists $\delta>0$ such that for any $a_1, a_2 \in \overline{\itV}$,
$$|a_1-a_2|<\delta \Rightarrow \sup_{{\bbech\in \real^k, \blach \in \itL_{k_n}}}|L^{\star}(\bbe, \bla, a_1)-L^{\star}(\bbe, \bla, a_2)|<\epsilon\, .$$

\item[\textbf{C2.}] The  true function $\eta_0$ is nondecreasing and its $r-$th derivative satisfies a Lipschitz condition on $[0, 1]$, with
  $r\geq 1$, that is,
$$\eta_0 \in \itH_r = \{ g\in C^r[0, 1]: \|g^{(j)}\|_\infty\leq C_1,\;0\leq j\leq r\; \mbox{ and } \; |g^{(r)}(z_1) - g^{(r)}(z_2)| \leq C_2|z_1-  z_2|\}\,.$$

\item[\textbf{C3.}] The maximum spacing of the knots is assumed to be of order $O(n^{-\nu})$, $0< \nu < 1/2$. Moreover, the
  ratio of maximum and minimum spacings of knots is uniformly bounded.
  
\item[\textbf{C4.}]  The class of functions   $\itF_n$ is such that, for any   $0<\epsilon<1$, $\log\left(N(\epsilon, \itF_n, L_1(P_n))\right)= O_\prob(1)({k_n})\log(1/\epsilon)$, for some constant $C_1>0$ independent of $n$  and $\epsilon$.

\end{itemize}

For simplicity, denote as $L(\bthe_0, \kappa_0)=L(\bbe_0, \eta_0, \kappa_0)$, where $\bthe_0=(\bbe_0, \eta_0)$ and $ \wbthe=(\wbbe, \weta)$  the estimators defined through \eqref{eq:estimadores} with $\weta(t)=\sum_{j=1}^{k_n}\wlam_j\,B_j(t )$. To measure the closeness between the estimators and the parameters, consider the metric $\pi^2(\bthe_0 ,\wbthe ) =\|\bbe_0-\wbbe \|^2+ \|\eta_0-\weta\|^2_{\itF}$ where $\|\cdot\|_{\itF}$ stands for a norm in the space of functions $\itF=\{g\,:\,[0,1]\to \real, \mbox{ such that } g \mbox{ is a continuous function} \}$, such as $\| f\|_2=\left(\esp f^2(t_1)\right)^{1/2}$ or    $\|f\|_{\infty}=\sup_{t\in [0,1]} |f(t)|$. Let $\itA_\epsilon=\{\bthe=(\bbe,g): \bbe\in \real^p, g \in \itG\cap \itF, \pi(\bthe,\bthe_0)>\epsilon\}$.

\begin{theorem} \label{thm:consistency}
  Let $(y_i,\bx_i,t_i)\trasp$ be i.i.d. observations satisfying \eqref{eq:GPLM}.  Assume that \textbf{C0}  to \textbf{C4} hold and that for any $\epsilon>0$, $\inf_{\bthech\in \itA_\epsilon}L(\bthe,\kappa_0)>L(\bthe_0,\kappa_0)$ and that $k_n = O(n^\nu)$ for $1/(2r + 2) < \nu <   1/(2r)$. Then, we have that $\pi(\bthe_0 ,\wbthe )\convpp 0$.
\end{theorem}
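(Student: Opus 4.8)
The plan is to run the classical argmin--consistency scheme for $M$--estimators, adapted to the growing spline sieve $\itM_n(\itT_n,\ell)$ and to the plug--in of the nuisance estimator $\wkappa$. \emph{Step 1 (monotone spline approximant).} By \textbf{C2} and the standard estimates for shape--preserving spline approximation --- the approximation rates in Schumaker (1981) together with Theorem~5.9 there, which encodes monotonicity through the ordering of the coefficients --- there is a sequence $\wteta_n=\wtblam_n\trasp\bB\in\itM_n(\itT_n,\ell)$ with $\|\wteta_n-\eta_0\|_{\infty}\le C\,\delta_n^{\,r}$, where $\delta_n$ is the maximal knot spacing. By \textbf{C3} (which also forces $k_n\to\infty$) and $k_n=O(n^{\nu})$ this bound is $O(n^{-r\nu})=o(1)$. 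Since $\rho$ is continuous in its arguments and $\rho,w$ are bounded (\textbf{C1}), pointwise convergence of the integrands together with dominated convergence yields $L(\bbe_0,\wteta_n,\kappa_0)\to L(\bbe_0,\eta_0,\kappa_0)=L(\bthe_0,\kappa_0)$.

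\emph{Step 2 (uniform strong law over the sieve, uniformly in the nuisance).} I would first establish that
$$R_n:=\sup_{\bbech\in\real^p,\ \blach\in\itL_{k_n},\ a\in\itV}\left|L_n(\bbe,\bla\trasp\bB,a)-L(\bbe,\bla\trasp\bB,a)\right|\ \longrightarrow\ 0\quad\text{a.s.}$$
The class $\itF_n$ has envelope bounded by $\|\rho\|_{\infty}\|w\|_{\infty}$ (\textbf{C1}); by \textbf{C4}, $\log N(\epsilon,\itF_n,L_1(P_n))=O_{\prob}(1)\,k_n\log(1/\epsilon)$; and since $\nu<1/(2r)\le 1/2$ one has $k_n\log n/n\to0$. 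A symmetrization and exponential--inequality argument for uniformly bounded classes, combined with this random covering bound and Borel--Cantelli over $n$ (the triangular--array form of the Glivenko--Cantelli theorem, cf.\ van der Vaart and Wellner (1996)), then gives $R_n\to0$ a.s. Using next the equicontinuity of $L^{\star}$ in $a$ from \textbf{C1} together with \textbf{C0} (so that $\wkappa\in\itV$ eventually and $|\wkappa-\kappa_0|$ is, eventually, smaller than the modulus $\delta$ associated with any prescribed $\epsilon$), I upgrade this to
$$R_n':=\sup_{\bbech\in\real^p,\ \blach\in\itL_{k_n}}\left|L_n(\bbe,\bla\trasp\bB,\wkappa)-L(\bbe,\bla\trasp\bB,\kappa_0)\right|\ \longrightarrow\ 0\quad\text{a.s.}$$

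\emph{Step 3 (basic inequality and separation).} Since $\weta$ and $\wteta_n$ both lie in $\itM_n(\itT_n,\ell)$, the definition \eqref{eq:estimadores} of $(\wbbe,\weta)$ gives $L_n(\wbbe,\weta,\wkappa)\le L_n(\bbe_0,\wteta_n,\wkappa)$. Decomposing $L(\wbbe,\weta,\kappa_0)-L(\bthe_0,\kappa_0)$ into the four telescoping differences, bounding the two ``empirical minus population'' terms by $R_n'$, the minimization term by $0$, and the remaining term by Step 1, one obtains
$$L(\wbbe,\weta,\kappa_0)-L(\bthe_0,\kappa_0)\ \le\ 2\,R_n'+\left(L(\bbe_0,\wteta_n,\kappa_0)-L(\bthe_0,\kappa_0)\right)\ \longrightarrow\ 0\quad\text{a.s.}$$
Now $\wbthe=(\wbbe,\weta)$ belongs to $\real^p\times(\itG\cap\itF)$ because $\weta$ is continuous and nondecreasing; hence, for fixed $\epsilon>0$, whenever $\pi(\wbthe,\bthe_0)>\epsilon$ we have $\wbthe\in\itA_\epsilon$ and, by the separation hypothesis, $L(\wbthe,\kappa_0)\ge\inf_{\bthech\in\itA_\epsilon}L(\bthe,\kappa_0)=L(\bthe_0,\kappa_0)+\gamma_\epsilon$ with $\gamma_\epsilon>0$. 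Since the left side of the last display is eventually $<\gamma_\epsilon$ a.s., it follows that $\pi(\wbthe,\bthe_0)\le\epsilon$ for all large $n$, a.s.; intersecting over a countable sequence $\epsilon\downarrow0$ gives $\pi(\wbthe,\bthe_0)\convpp 0$.

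\emph{Main obstacle.} The crux is Step 2: turning the $L_1(P_n)$ entropy bound \textbf{C4} into a bona fide almost--sure uniform law over the $n$--dependent class $\itF_n$, whose Euclidean index $\bbe\in\real^p$ is unbounded. The unboundedness is harmless because the envelope is bounded ($w$ controls leverage and $\rho$ is bounded), but the triangular--array structure must be handled with care --- either a direct chaining/concentration argument followed by Borel--Cantelli, checking that the $O_\prob(1)$ randomness in the covering number does not spoil summability of the deviation probabilities, or an appeal to a ready--made sieve Glivenko--Cantelli theorem --- and one has to verify that the only rate restriction the argument needs, $k_n\log n/n\to0$, is guaranteed by $\nu<1/2$.
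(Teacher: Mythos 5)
Your proposal is correct and follows essentially the same route as the paper's proof: an almost-sure uniform law of large numbers over the sieve class $\itF_n$ obtained from \textbf{C1} and the entropy bound \textbf{C4} (the paper invokes exercise 3.6 of van der Geer, 2000, exactly to settle the triangular-array issue you flag as the main obstacle), a monotone spline approximant of $\eta_0$ with $\|g_n-\eta_0\|_\infty=O(n^{-r\nu})$ (the paper cites Lemma A1 of Lu \textsl{et al.}, 2007, rather than Schumaker, and takes spline order $\ell\ge r+2$), the basic inequality from the definition of $(\wbbe,\weta)$ combined with the equicontinuity in the nuisance parameter from \textbf{C1} and \textbf{C0}, and finally the separation condition $\inf_{\bthech\in\itA_\epsilon}L(\bthe,\kappa_0)>L(\bthe_0,\kappa_0)$ to pass from $L(\wbthe,\kappa_0)\to L(\bthe_0,\kappa_0)$ a.s.\ to $\pi(\wbthe,\bthe_0)\convpp 0$. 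Your telescoping decomposition is just a reorganization of the paper's terms $A_{n,1},A_{n,2},A_{n,3},S_{n,1},S_{n,2}$, so no substantive difference remains.
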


\begin{remark}{\label{rem:supC0}}
 As mentioned above, for the logistic and Poisson model, $\kappa_0$ is known and does not need to be estimated, hence  \textbf{C0} may be omitted. On the other hand, when $H(t)=t$ the scale parameter $\kappa_0$ may be estimated using any robust scale estimator computed without using the monotone constraint. To be more precise, let  $(\wbbe,\weta )$ be the robust estimators of $(\bbe_0,\eta_0)$ defined in Bianco and Boente (2004) and define the residuals as $r_i=y_i- \bx_i\trasp \wbbe-\weta(t_i)$. The scale estimator $\wkappa$ can be taken as  $\median_{1\le i\le n} |r_i|$.  Another possibility is to consider a   scale estimator based on a $\rho-$function as follows.  As in Maronna \textsl{et al.} (2006), let $\chi : \real \to \real_+$ be a $\rho-$function, that is, an even function, non--decreasing on $|t|$, increasing for $t>0$ when $\chi(t)<\|\chi\|_\infty$ and such that $\chi(0) = 0$. The   estimator $\wkappa$  of the scale $\kappa_0$  is the solution    
\begin{equation}\label{escala}
\frac 1n\sum_{i=1}^n  \chi_c \left(  \frac{ r_i}{s} \right) \ = b \,,
\end{equation}
 where $\chi_c(u) = \chi(u/c)$,   $c > 0$ is a user--chosen tuning constant and $b$ is related to the breakdown point of the scale estimator. If  $\chi$ is bounded, it is usually assumed that $\|\chi\|_{\infty}=1$ in which case  $0<b<1$. For instance, when $\chi$ is the Tukey's biweight function, the choice
$c = 1.54764$ and $b=1/2$ leads to an scale estimator  Fisher--consistent at the normal distribution with breakdown point $0.5$. On the other hand,   the choice $\chi(t)=\indica_{(1,\infty)}(|t|)$, $c=1$ and $b=0.5$ leads to   $\median_{1\le i\le n} |r_i|$. Similarly,  when the responses have a Gamma distribution the parameter $\kappa_0$ corresponds to the tuning constant and is related to the shape parameter. It can be estimated using a preliminary $S-$estimator  computed without making use of the monotone restriction, as   described in  Section \ref{gamacaso}. Straightforward calculations allow to show that in both situations \textbf{C0} holds. 

Assumption   \textbf{C1} is a standard requirement since it states that the weight function controls large values of the covariates and that the score function bounds large residuals, respectively. Moreover, the equicontinuity requirement allows to deal with the nuisance parameter in a general setting and a similar condition appears in   Bianco \textsl{et al.} (2013a). For the particular case of a partly linear regression model, i.e., when $H(t)=t$, $\kappa_0$ is the scale parameter and the function $\rho(y,u,a)$ is usually chosen as $\rho(y,u,a)=\phi((y-u)/a)$ where the function $\phi$ is an even, bounded  function, non--decreasing on $(0,\infty)$. In this case, the equicontinuity condition is satisfied, for instance, if $\phi$ is continuously differentiable with first derivative $\phi^{\prime}$ such that $s \, \phi^{\prime}(s)$ is bounded.

\textbf{C2} and \textbf{C3} are conditions regarding the smoothness of the nonparametric component and the knots spacing. They are analogous to those considered, for instance, in Lu (2010, 2015). On the other hand, the requirement $\inf_{\bthech\in \itA_\epsilon}L(\bthe,\kappa_0)>L(\bthe_0,\kappa_0)$  ensures that $L(\bthe_0,\kappa_0)$ does not   attain a minimum value at infinite. It was also a requirement in Boente \textsl{et al.} (2006) and Boente and Rodr\'{\i}guez (2010) to guarantee strong consistency. It can be replaced by the condition that $(\wbbe, \wblam)$ lie ultimately in a compact set since $(\bbe_0,\eta_0)$ is the unique minimizer of $L(\bbe,g,\kappa_0)$ as stated in \eqref{fisher}.
  
  Assumption   \textbf{C4} is satisfied for most loss functions  $\rho$.  Effectively, assume that $\kappa_0$ is known and that the densities are such that the covering number of the class 
  $$\itF_0=\{g(y,\bx)=\log  f\left(y,H\left(\bx\trasp\bbe+\bla\trasp\bB\right)\right), \bbe\in \real^p, \bla\in \real^{k_n}\}$$
   grows at a polynomial rate, i.e., it is
  bounded by $A \epsilon^{-(k_n+p+1)}$. Then,  if the functions $\varphi(s)$ and  $G(H(s))$ are of bounded variation, we obtain the result using that 
  $N\left(\epsilon,{\cal H}_1+{\cal H}_2,L_r(\qu)\right) \le   N\left(\epsilon/2,\itH_1,L_r(\qu)\right)N\left(\epsilon/2,\itH_2,L_r(\qu)\right)$. A similar bound can be obtained for the bracketing numbers. For the score functions usually  considered in robustness, such as the Tukey's biweight function  or the score function introduced in Croux and    Haesbroeck (2002) for the logistic model, $\varphi$ and $G(H(s))$  have bounded variation and the required condition is easily verified using the  permanence properties of $VC-$classes of functions since  the class $\{ \bx\trasp\bb+\bla\trasp\bB, \bb\in \real^p, \bla\in \real^{k_n}\}$ is a finite--dimensional class and so a $VC-$class. Furthermore, if $\kappa_0$ plays the role of the tuning constant or the scale parameter, as in the Gamma model or when $H(t)=t$ and the errors have a symmetric distribution, the same conclusions hold.
\end{remark}

\subsection{Convergence rates}{\label{sec:tasas}}
In order to derive rates of convergence for the estimators,  we choose as norm $\|\cdot\|_{\itF}$    in the space of functions $\itF$, the $L^{\wp}(Q)$ norm, with $2\le \wp \le \infty$, where $t\sim Q$. Hence, we include as possible norms $\|f\|_{\itF}^2=\|f\|_2^2=\esp f^2(t)$ or  $\|f\|_{\itF}^2=\|f\|_{\infty}$, in which case  $\pi^2(\bthe_1 ,\bthe_2 ) =\|\bbe_1-\bbe_2 \|^2+ \|\eta_1-\eta_2\|^2_{\wp}$ with $\wp=2$ or $\wp=\infty$, respectively. Furthermore, in this setting we define the distance 
$$\pi_\prob^2(\bthe_1 ,\bthe_2 )= \esp\left(w(\bx) \left[\bx\trasp(\bbe_1-\bbe_2)+\eta_1(t)-\eta_2(t)\right]^2\right)\,,$$ 
where for $j=1,2$, $\bthe_j=(\bbe_j, \eta_j)\in \Theta=\real^p\times \itG$. 

We  consider  the following additional assumptions.  Two possible conditions on the bracketing entropy are stated below and according to them weaker or stronger convergence rates are attained.  Conditions under which they hold for some particular models are given in Remark \ref{rem:supC6}. 

To avoid requiring an order of consistency to the estimator $\wkappa$ of $\kappa_0$, from now on we will assume that $L(\bbe_0,\eta_0,a)<L(\bbe,g,a)$ for any $\bbe\in\real^p$ and $g\in \itM_n(\itT_n,\ell)$, $a\in \itV$ such that $(\bbe, g)\ne (\bbe_0,\eta_0)$. This condition clearly entails Fisher--consistency and holds, for instance, for the log--partly linear regression model and when $H(t)=t$ if the errors have a symmetric distribution.

From now on, for $\bla\in \real^{k_n}$, $g_{\blachch}(t)$ stands for the spline function $g_{\blachch}(t)=\bla\trasp\bB(t)$.
\begin{itemize}

\item[\textbf{C5$^{\star}$.}]   Let
$\itG_{n,c, \blach_0}=\{f(y,\bx,t)=\left[\rho\left(y,\bx\trasp\bbe+g_{\blachch}(t), a\right)-\rho\left(y,\bx\trasp\bbe_0+g_{\blachch_0}(t),  a\right)\right]w(\bx)\,,\; \|\bbe -\bbe_0\|<\epsilon_0\,,$
$ \bla\in \itL_{k_n}, a\in  \itV ,  \pi_\prob((\bbe_0,g_{\blachch_0}(t)) ,\bthe) \le c \}$.
For some constant $C_2>0$ independent of $n$, $\bla_0\in \itL_{k_n}$ and $\epsilon$, we have that $N_{[\;]}(\epsilon, \itG_{n,c, \bla_0}, L_2(P))\le  C_2\left(c /\epsilon\right)^{k_n+p+1}$.

\item[\textbf{C5$^{\star\star}$.}] For   $n\ge n_0$, the family of functions    $\itF_{n,c}^\star= \{f(y,\bx,t)=\rho\left(y,\bx\trasp\bbe+g_{\blachch}(t), a\right)w(\bx),  \bla\in \itL_{k_n}, a\in  \itV ,  \pi(\bthe_0 ,\bthe) \le c \}$   is such that for  any    $0<\epsilon<1$, $N_{[\;]}(\epsilon,\itF_{n,c}^\star, L_2(P))\le  C_2/\epsilon^{k_n+p+1}$, for some constant $C_2>0$ independent of $n$ and $\epsilon$. 

  \item[\textbf{C6.}] 
  \begin{itemize}
 \item[a)] The function  $\rho$ is twice continuously differentiable with respect to its second argument with derivatives    $\Psi\left(y,u, a\right)={\partial \rho(y,u,a)}/{\partial u}$ and $\chi\left(y,u, a\right)={\partial \Psi(y,u,a)}/{\partial u} $ such that
    $$\|\Psi\|_{\infty, \itV}=\sup_{y\in \real,u\in \real, a\in \itV} |\Psi\left(y,u, a\right)|<\infty \mbox{ and }\|\chi\|_{\infty, \itV}=\sup_{y\in \real,u\in \real, a\in \itV} |\chi\left(y,u, a\right)|<\infty\,.$$
  \item[b)]  $\esp\left\{\Psi\left(y_1,\bx_1\trasp\bbe_0 + \eta_0(t_1), a\right)| (\bx_1,t_1)\right\}=0$, almost surely, for any $a\in \itV$.
  \end{itemize}
   \item[\textbf{C7.}] $\esp w(\bx_1)\,\|\bx_1\|^2<\infty$.
  \item[\textbf{C8.}] There exists $\epsilon_0>0$ and a positive constant $C_0$, such that for any $\bthe \in \real^p\times  \itM_n(\itT_n,\ell)$ with $\pi^2(\bthe ,\bthe_0 )<\epsilon_0$ and any $a\in \itV$,  $L(\bthe, a)-L(\bthe_0, a)\ge C_0\,\pi_\prob^2(\bthe,\bthe_0)$.

\end{itemize}

\begin{theorem} \label{thm:rate}
  Let $(y_i,\bx_i,t_i)\trasp$ be i.i.d. observations satisfying  \eqref{eq:GPLM} and $k_n = O(n^\nu)$ for $1/(2r + 2) < \nu <  1/(2r)$. Assume that \textbf{C1} to \textbf{C3} and \textbf{C6} to  \textbf{C8} hold and that  $\pi(\wbthe, \bthe_0)\convpp 0$. Then, we have that  
\begin{itemize}
\item[a)] if \textbf{C5$^{\star}$} holds,  $\gamma_n\,\pi_\prob(\bthe_0 ,\wbthe )=O_\prob(1)$, where  $\gamma_n=n^{\min(r\nu, (1-\nu)/2)}$, so if $\nu=1/(1+2r)$, the  estimators converge at the optimal rate $n^{r/(1+2r)}$.
\item[b)] if \textbf{C5$^{\star\star}$} holds,  $\gamma_n\,\pi_\prob(\bthe_0 ,\wbthe )=O_\prob(1)$, for any  $\gamma_n$, such that $\gamma_n \leq O(n^{r\nu})$ and $\gamma_n \log(\gamma_n)\leq O(n^{(1-\nu)/2})$. 
\end{itemize}
\end{theorem}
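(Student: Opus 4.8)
The plan is to follow the classical route for convergence rates of sieved $M$-estimators under shape constraints, as in van der Vaart and Wellner (1996) and as adapted to monotone $B$-splines by Lu (2010) and Du \textsl{et al.} (2013): I would split the estimation error $\pi_\prob(\wbthe,\bthe_0)$ into a deterministic spline-approximation bias of order $n^{-r\nu}$ and a stochastic part whose size is governed by the bracketing entropy bounds \textbf{C5$^{\star}$} or \textbf{C5$^{\star\star}$} (the former yielding the sharp rate, the latter a logarithmically inflated one), and close the argument with a peeling/maximal-inequality device centred at the best spline approximant rather than at $\bthe_0$.

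First I would fix that approximant. By \textbf{C2} and the quasi-uniform knot condition \textbf{C3}, monotonicity-preserving spline approximation theory (Schumaker, 1981; see also Lu, 2010) produces $\bla_n\in\itL_{k_n}$ such that $\eta_n^{\star}=\bla_n\trasp\bB$ is nondecreasing and $\|\eta_n^{\star}-\eta_0\|_{\infty}=O(k_n^{-r})=O(n^{-r\nu})=:a_n$; with $\bthe_n^{\star}=(\bbe_0,\eta_n^{\star})\in\real^p\times\itM_n(\itT_n,\ell)$ and $w$ bounded (\textbf{C1}), one gets $\pi_\prob(\bthe_n^{\star},\bthe_0)=O(a_n)$. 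Next I would write the basic inequality on the event $\{\pi(\wbthe,\bthe_0)<\epsilon_0,\ \wkappa\in\itV\}$, which holds eventually almost surely by consistency of the preliminary estimators: minimality of $\wbthe$ over $\real^p\times\itM_n(\itT_n,\ell)$ gives $L_n(\wbthe,\wkappa)\le L_n(\bthe_n^{\star},\wkappa)$, so with $m_{\bthe,a}(y,\bx,t)=\rho(y,\bx\trasp\bbe+g(t),a)\,w(\bx)$ and after adding and subtracting population expectations,
\[
 L(\wbthe,\wkappa)-L(\bthe_0,\wkappa)\le\big[L(\bthe_n^{\star},\wkappa)-L(\bthe_0,\wkappa)\big]+(P_n-P)\big(m_{\bthe_n^{\star},\wkappa}-m_{\wbthe,\wkappa}\big).
\]
The left side is $\ge C_0\,\pi_\prob^2(\wbthe,\bthe_0)$ by \textbf{C8}; a second-order Taylor expansion of $\rho$ in its middle argument, whose first-order term vanishes by \textbf{C6}(b) after conditioning on $(\bx,t)$ and whose remainder is controlled by $\|\chi\|_{\infty,\itV}$ from \textbf{C6}(a), shows $|L(\bthe,a)-L(\bthe_0,a)|\le\tfrac12\|\chi\|_{\infty,\itV}\,\pi_\prob^2(\bthe,\bthe_0)$ uniformly in $a\in\itV$, so the first bracket on the right is $O(a_n^2)$. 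Hence $C_0\,\pi_\prob^2(\wbthe,\bthe_0)\le O(a_n^2)+\big|(P_n-P)(m_{\bthe_n^{\star},\wkappa}-m_{\wbthe,\wkappa})\big|$.

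The core step, which I expect to be the main obstacle, is to bound the empirical-process term through the modulus of continuity of $\mathbb{G}_n=\sqrt n\,(P_n-P)$ over $\pi_\prob$-balls centred at $\bthe_n^{\star}$. A mean value step together with \textbf{C6}(a) and \textbf{C1} gives $\|m_{\bthe,a}-m_{\bthe_n^{\star},a}\|_{L_2(P)}\le C\,\pi_\prob(\bthe,\bthe_n^{\star})$ and a uniformly bounded envelope; feeding \textbf{C5$^{\star}$}, applied with centring vector $\bla_0=\bla_n$ and radius $c=\delta$ (legitimate since $C_2$ there is independent of the centre), into a bracketing maximal inequality (Lemma 3.4.2 of van der Vaart and Wellner, 1996) yields
\[
 \esp\sup_{\pi_\prob(\bthe,\bthe_n^{\star})\le\delta,\ a\in\itV}\big|\mathbb{G}_n\big(m_{\bthe,a}-m_{\bthe_n^{\star},a}\big)\big|\le\phi_n(\delta),
\]
with $\phi_n(\delta)\asymp\sqrt{k_n}\,\delta+k_n/\sqrt n$ under \textbf{C5$^{\star}$} (the factor $c$ in the bracket count absorbing the logarithm) and $\phi_n(\delta)\asymp\sqrt{k_n}\,\delta\log(1/\delta)+k_n\log^2(1/\delta)/\sqrt n$ under \textbf{C5$^{\star\star}$}. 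Since $\delta\mapsto\phi_n(\delta)/\delta$ is decreasing, the general rate theorem for $M$-estimators (Theorem 3.2.5 of van der Vaart and Wellner, 1996), centred at $\bthe_n^{\star}$ so that the bias enters only through the requirement $\gamma_n\le O(a_n^{-1})=O(n^{r\nu})$, gives $\gamma_n\,\pi_\prob(\wbthe,\bthe_n^{\star})=O_\prob(1)$ whenever also $\gamma_n^2\,\phi_n(1/\gamma_n)\le O(\sqrt n)$, and then $\pi_\prob(\bthe_n^{\star},\bthe_0)=O(a_n)$ transfers this to $\pi_\prob(\wbthe,\bthe_0)$.

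It then remains to solve $\gamma_n^2\phi_n(1/\gamma_n)\le O(\sqrt n)$ in each case. Under \textbf{C5$^{\star}$} this reads $\gamma_n\sqrt{k_n}\le O(\sqrt n)$, i.e.\ $\gamma_n\le O(n^{(1-\nu)/2})$ since $k_n=O(n^\nu)$, so together with $\gamma_n\le O(n^{r\nu})$ the choice $\gamma_n=n^{\min(r\nu,(1-\nu)/2)}$ is admissible, which equals the optimal $n^{r/(1+2r)}$ at $\nu=1/(1+2r)$, proving (a); under \textbf{C5$^{\star\star}$} the logarithm in $\phi_n$ turns the second requirement into $\gamma_n\log\gamma_n\le O(n^{(1-\nu)/2})$, which with $\gamma_n\le O(n^{r\nu})$ gives (b). The genuinely delicate aspects are the uniformity of the bracketing control over the nuisance parameter $a\in\itV$ and over the growing dimension $k_n$ -- this uniformity, together with the equicontinuity in \textbf{C1} and the standing assumption that $L(\bbe_0,\eta_0,a)<L(\bbe,g,a)$ for all $a\in\itV$, is exactly what makes it unnecessary to control the rate of $\wkappa$ -- and the bookkeeping in the peeling that keeps the bias $n^{-r\nu}$ balanced against the stochastic rate $n^{-(1-\nu)/2}$.
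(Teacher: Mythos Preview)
Your proposal is correct and follows essentially the same route as the paper: both centre the argument at the monotone spline approximant $\bthe_{0,n}=(\bbe_0,g_n)$ with $\|g_n-\eta_0\|_\infty=O(n^{-r\nu})$, use \textbf{C8} for the quadratic lower bound on $L(\bthe,\wkappa)-L(\bthe_0,\wkappa)$, kill the first-order term in $L(\bthe_{0,n},a)-L(\bthe_0,a)$ via \textbf{C6}(b) and bound the remainder by \textbf{C6}(a), invoke Lemma~3.4.2 of van der Vaart and Wellner (1996) with the bracketing bounds \textbf{C5$^\star$}/\textbf{C5$^{\star\star}$} to obtain the same $\phi_n(\delta)$ in each case, and then solve $\gamma_n^2\phi_n(1/\gamma_n)\lesssim\sqrt n$ together with $\gamma_n\lesssim n^{r\nu}$. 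The only cosmetic difference is that the paper packages the peeling step by quoting Theorem~3.4.1 of van der Vaart and Wellner rather than Theorem~3.2.5, and verifies its three displayed conditions (\ref{aprobar1})--(\ref{aprobar3}) directly; the substance is identical.
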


\begin{remark}{\label{rem:supC6}}
 Note that condition \textbf{C6}b) is analogous to the conditional Fisher--consistency stated in K\@unsch \textsl{et al.} (1989), while condition \textbf{C5$^{\star}$} is analogous to assumption C3$^{\prime}$  in Shen and Wong (1994). Similar arguments to those considered in Shen and Wong (1994) when analysing the Case 3 in page 596, allow to show that  \textbf{C5$^{\star}$} holds, for instance, when $H(t)=t$ when $\phi$ is continuously differentiable with first derivative $\phi^{\prime}$ such that $s \, \phi^{\prime}(s)$ is bounded. It also holds for the logistic model and  for the gamma model when  $w(\bx)\,\|\bx\|^2$ is bounded  using  \textbf{C6}a). 

\end{remark}

\section{The log--Gamma regression model}{\label{gamacaso}}

Among generalized linear models,  the Gamma distribution with a log--link, usually denoted log--Gamma regression, plays an important role, see Chapter 8 of McCullagh and Nelder (1989).  For any $\alpha>0$ and $\mu>0$,   denote as  $\Gamma(\alpha,\mu)$ the parametrization of the Gamma distribution  given by the
density 
$$ f(y,\alpha,\mu)={\alpha^{\alpha}}\;y^{\alpha-1} \;\exp \left(-(\alpha/\mu)y \right)\left
  \{\mu^{\alpha}\,\Gamma(\alpha) \right \}^{-1}\,I_{y \geq
  0}\;.$$
  Under a log--Gamma model,  $y_i|\bx_i\sim
\Gamma(\alpha,\mu_i)$, where $\mu_i=\esp(y_i|(\bx_i,t_i))$ with link function  $\log(\mu_i)= \bbe_0\trasp\bx_i+\eta_0(t_i)$.
As it is well known, in this case, the responses can be transformed so that they are
modelled through a linear regression model with asymmetric errors (see for instance Cantoni and Ronchetti, 2006). Let  $z_i=\log(y_i)$ be the transformed responses,
then  
\begin{equation}
z_i=\bx_i\trasp\bbe_0+\eta_0(t_i) +u_i\,,
\label{eq:modelologgamma}
\end{equation}
 where $u_i$ and $(\bx_i,t_i)$ are independent. Moreover,  $u_i\sim\log(\Gamma(\alpha,1))$ with density 
\begin{equation}
  \label{eq:gammaden}
  g(u,\alpha)=\frac{\alpha^{\alpha}}{\,\Gamma(\alpha)}\;\exp \left [\alpha(u-\exp(u))
  \right ] \;. 
\end{equation}
This density is asymmetric and unimodal with maximum at $u_{0}=0$. For fully parametric linear models. i.e., when $\eta_0(t)=\gamma_0\, t $,    a description on  
robust estimators based on deviances was given in Bianco \textsl{et al.} (2005), while Heritier \textsl{et al.} (2009) considered $M-$type estimators based on Pearson residuals. For the sake of completeness, we will  describe how to adapt the estimators based on deviances to the  present situation.

\noi We will consider the transformed model \eqref{eq:modelologgamma} and denote by $d_{i}(\bbe_0,\eta_0,\alpha)$ the deviance component of the $i$-th observation, i.e., 
$$d_{i}(\bbe_0,\eta_0,\alpha) =2\alpha\;d\left(z_i-\left[\bx_i\trasp\bbe_0+\eta_0(t_i)\right]\right) $$
where   $d(u)= \exp(u)- u -1$. 

In this setting, the classical estimators to be considered below are not based on the quasi--likelihood but on the deviance  and they correspond to the choice $\varphi_a(u)=\varphi(u)=u$ in \eqref{eq:rhobia}, since no tuning constant is needed. Thus, the loss function equals $\rho(z,s)=  d(z- s)$, while
$\Psi(z,s)= {\partial \rho(z,s)}/{\partial s} =1- \exp(z-s)$, $ \chi(z,s)={\partial \Psi(z,s)}/{\partial s}=  \exp(z-s)$. Hence,   if $\bB(t) =\left(B_1(t),\dots,B_{k_n}(t) \right)$,  the classical estimators of $(\bbe_0,\eta_0)$ without any restriction are obtained as $(\wbbe,\weta)$ where $\weta(t)=\wblam\trasp \bB(t)$ with 
$$(\wbbe, \wblam)=\argmin_{\bbech, \blach}\sum_{i=1}^{n} d \left(z_{i}-\left[\bx_{i}\trasp\bbe+\bla\trasp \bB_i\right]\right)\,,$$
where,  for the sake of simplicity, we have denoted as $\bB_i=\left(B_1(t_i), \dots, B_{k_n}(t_i)\right)\trasp$, so    $\bla\trasp \bB_i=\sum_{i=j}^{k_n}\lambda_j B_j(t_i)$.

On the other hand,  robust estimators are obtained controlling large values of the deviance, with a  $\rho-$ function $\phi$,  as defined in Maronna \textsl{et al.} (2006), i.e.,   an even function, non--decreasing on $|y|$, increasing for $y>0$ when $\phi(y)<\lim_{t\to +\infty}\phi(t)$ and such that $\phi(0) = 0$. An example of such functions is   the Tukey's biweight score function, $\phi(y)=\phi_{\tuk }( y) = \min\left(3 y ^2 - 3 y ^4 +  y ^6, 1\right)$. Hence, in this case
$$\rho(z,s, a)  = \phi\left( \frac{\sqrt{d\left(z -s\right)}}a\right)\,,$$
so the tuning constant $a$ needs to be chosen, unless it is fixed by the practitioner. Note that with this notation, the classical estimator corresponds to $\phi(u)=u^2$. 

To provide an algorithm to compute the estimators with an adaptive constant, let us consider the situation in which we have fixed $k_n$ so that we seek for $\bla$ such that $\sum_{i=j}^{k_n}\lambda_j B_j(t)$ provides a good approximation for $\eta_0(t)$.  
As in Bianco \textsl{et al.} (2005), a three step procedure can be considered to compute initial estimators of the parameters. First note that, since the tuning constant of the loss function depends on the unknown parameter $\alpha$,  Bianco \textsl{et al.} (2005) introduce an adaptive sequence of tuning constants $\wc_{\eme,n}$ to define a sequence of $M-$estimators, $\wbthe_{\eme,n}=(\wbbe_{\eme,n},\wblam_{\eme,n})$. 
When $k_n$ is fixed, these estimators, which satisfy  
$$\wbthe_{\eme,n}=\argmin_{\bbech,\blach}\sum_{i=1}^{n}  \phi\left(  \frac{\sqrt{d \left(z_{i}-\left[\bx_{i}\trasp\bbe+\bla\trasp \bB_i\right]\right)}}{\wc_{\eme,n}}\right)\,,$$
for constants  $\wc_{\eme,n}\convprob c_0$, have as asymptotic covariance matrix $\left({B(\phi,\alpha,c_{0})}/{A^{2}(\phi,\alpha,c_{0})}\right)\bSi_0$
where $\bSi_0$ is the asymptotic covariance matrix of the classical estimators obtained when $\phi(u)=u^2$. The constants $B(\phi,\alpha,c_{0})$ and $A^{2}(\phi,\alpha,c_{0})$ depend only on the derivative of  the score function $\phi$ and the shape parameter $\alpha$, but not on the covariates. Hence, the estimators can be calibrated to attain a given efficiency. From now on, denote $C_e(\alpha)$  the value of the tuning constant $c_0$ such that the $M-$ estimator has efficiency $e$ with respect to the classical one. Note that in particular, $e$ will be the efficiency of the regression estimator $\wbbe_{\eme,n}$.

In our modification, we consider the following four step algorithm  to compute a generalized $MM-$estimator. It is worth noticing that the method to be described below is just the proposal considered in  Bianco \textsl{et al.} (2005) applied to the finite--approximation of $\eta_0$ but taking into account the order restrictions.
\begin{itemize}
\item \textbf{Step 1.} We first  compute an initial $S-$estimates   $\wtbthe=(\wtbbe_n,\wtblam_n)$ and the corresponding scale estimate  $\wsigma_{n}$ taking $b=\sup\phi/2$.  To be more precise,   for each value of $(\bbe,\bla)$ let ${\sigma}_{n}(\bbe,\bla)$ be the  $M-$scale estimate of $\sqrt{d \left(z_{i}-\left[\bx_{i}\trasp\bbe+\bla\trasp \bB_i\right]\right)}$ given by 
  \begin{eqnarray*}
    \frac 1n \sum_{i=1}^{n} \phi\left(\frac{\sqrt{d \left(z_{i}-\left[\bx_{i}\trasp\bbe+\bla\trasp \bB_i\right]\right)}}{{\sigma}_{n}(\bbe,\bla)}\right)=b\,
    \,, 
  \end{eqnarray*}
  where $\phi$ is the Tukey bisquare function, $\phi_{\tuk }$.
  \vskip0.1in
  The $S-$estimate of $(\bbe_0, \bla_0)$ for the considered model is defined as $  \;\wtbthe_n=\argmin_{\bbech,\blach}\; {\sigma}_{n}(\bbe,\bla) $ and the corresponding scale estimate by $\wsigma_{n}=\min_{\bbech,\blach}\;\sigma_{n}(\bbe,\bla)$.   Let $u$ be a random variable with density \eqref{eq:gammaden} and write
  $\sigma^{\ast}(\alpha)$ for the solution of
  \begin{eqnarray*}
    \esp_{G}\left[\phi\left(  \frac{\sqrt{d(u_1)}}{\sigma^{\ast}(\alpha)}\right)\right]  =b\, .
  \end{eqnarray*}
  Similar arguments to those considered in  Theorem 5  in Bianco \textsl{et al.} (2005) combined with the results
  of  Theorem \ref{thm:consistency} allow to show that under mild conditions $\wtbbe_{n}\convpp \bbe_0$, $\|\wteta-\eta_0\|^2_{\itF}\convpp
  0$, where $\wteta=\sum_{i=1}^{k_n} \wtlam_i B_i$ and that   $\wsigma_{n}\convpp\sigma^{\ast}(\alpha)$. Moreover, as in Bianco
  \textsl{et al.} (2005),  $\sigma^{\ast}(\alpha)$ is a continuous and   strictly decreasing function and so, an estimator of $\alpha$ can be
  defined as $\widehat{\alpha}_n= \sigma^{\ast -1}(\widehat{\sigma}_{n})$ leading to a  a strongly consistent   estimator for $\alpha$.
\item \textbf{Step 2.} In the second step, we compute   $\wtau_{n}=\sigma^{\ast -1}(\wsigma_{n})$ and
  \begin{eqnarray*}
    \wc_{n}=\max(\wsigma_{n},C_{e}(\wtau_{n}))=\max (\wsigma_{n},C_{e}(\sigma^{\ast -1}(\wsigma_{n})) \, .
  \end{eqnarray*}
  We then have that $\wc_{n}\convprob c_0=\max\{\sigma^{\ast}(\alpha), C_e(\alpha)\}$.
\item \textbf{Step 3.} Let $\wbthe_{n}^{(0)}=\left(\wbbe^{(0)\, {\mbox{\footnotesize \sc t}}},\wblam^{(0)\, {\mbox{\footnotesize \sc t}}} \right)\trasp$  be the adaptive   $MM-$estimator  without restrictions defined by 
  \begin{equation}
    \wbthe_{n}^{(0)}=\argmin_{\bnuch=(\bbech,\blach)}\sum_{i=1}^{n} \phi\left( \frac{\sqrt{d \left(z_{i}-\left[\bx_{i}\trasp\bbe+\bla\trasp \bB_i\right]\right)}}{\wc_{n}}\right) w(\bx_i) . 
    \label{eq:Mestimatormis}
  \end{equation}
  where the weight function $w(\bx)$ controls large leverage points in the $\bx-$covariate space.
\item \textbf{Step 4.} If $\wlam_1^{(0)}\le \wlam_2^{(0)}\le \dots\le \wlam_{k_n}^{(0)}$, the final estimators are $\wbbe=\wbbe^{(0)}$  and $\weta(t)=\sum_{j=1}^{k_n} \wlam_j^{(0)}B_j(t)$. Otherwise, the final estimators are obtained using a standard non--linear minimization  algorithm with restrictions choosing as initial value   $(\wbbe_{n}^{(0)},\bla^{(0)})$, where $\bla^{(0)}\in \itL_{k_n}$. One  possible choice for $\bla^{(0)}$ is $\lambda_1^{(0)}=\lambda_2^{(0)}=0$ and $\lambda_i^{(0)}=i-2$ for $i=3,\dots,k_n$, in which case the  matrix $\bA$ below equals $\bA=(1,-1,0,\dots,0)$.

  We briefly describe below an algorithm to approximate the minimizer of $L_n(\bthe, \wc_{n})$ under the considered restrictions.
  \begin{itemize}
  \item Denote $\wbnabla(\bbe, \bla)=(\wbnabla_1(\bbe, \bla)\trasp, \wbnabla_2(\bbe, \bla)\trasp)\trasp$ the gradient function and $\wbH(\bbe, \bla)=(\wbH_{ij}(\bbe, \bla))_{1\le i,j\le 2}$  the gradient vector and negative Hessian matrix of the objective function, that is,
 \begin{eqnarray*}
 \wbnabla_1(\bbe, \bla)&=& 
  \dst\frac 1n \sum_{i=1}^n \Psi \left (z_i,\bx_i\trasp \bbe +\bB_i\trasp \bla,\wc_{n} \right ) w(\bx_i) \bx_i \\
\wbnabla_2(\bbe, \bla)&=& \dst\frac 1n \sum_{i=1}^n \Psi \left (z_i,\bx_i\trasp \bbe +\bB_i\trasp \bla,\wc_{n} \right ) w(\bx_i) \bB_i 
         \\
\wbH_{11}(\bbe, \bla)&=& 
\dst\frac 1n \sum_{i=1}^n \chi \left (z_i,\bx_i\trasp \bbe+\bB_i\trasp \bla,\wc_{n} \right ) w(\bx_i) \bx_i \bx_i\trasp \\
\wbH_{12}(\bbe, \bla)&=&
          \dst\frac 1n \sum_{i=1}^n \chi \left (z_i,\bx_i\trasp \bbe +\bB_i\trasp \bla,\wc_{n} \right ) w(\bx_i) \bB_i \bx_i\trasp\\
 \wbH_{21}(\bbe, \bla)&=&
          \dst\frac 1n \sum_{i=1}^n \chi \left (z_i,\bx_i\trasp \bbe +\bB_i\trasp \bla,\wc_{n} \right ) w(\bx_i)  \bx_i \bB_i\trasp\\
 \wbH_{22}(\bbe, \bla)&=&\dst \frac 1n \sum_{i=1}^n \chi \left (z_i,\bx_i\trasp \bbe +\bB_i\trasp \bla,\wc_{n} \right ) w(\bx_i)  \bB_i \bB_i\trasp
\end{eqnarray*}
where   
$$\Psi(z,s,a)= {\partial \rho(z,s,a)}/{\partial s} =\frac{1}{2\, a\,\sqrt{d\left(z -s\right)} }\phi^{\prime}\left( \frac{\sqrt{d\left(z -s\right)}}a\right)\left(1- \exp(z-s)\right)$$
 with $\phi^{\prime}$ the first derivative of $\phi$ and $\chi\left(z,u, a\right)={\partial \Psi(z,u,a)}/{\partial u} $.    Let $\itA=\{i_1,\dots, i_m\}$ the set of indices such that $\lambda^{(0)}_{i_j}=\lambda^{(0)}_{i_j+1}$. If $m>0$ define the working matrix as $\bA\in \real^{m\times(k_n+p)}$ in which the $j-$th row is the vector with its $i_j-$th element equal to $1$ and the  $(i_j+1)-$th element equal to $-1$, the remaining ones equal to $0$.
  \item Fix an initial value $\bthe$ (in the first step, $\bthe=(\wbbe_{n}^{(0)},\bla^{(0)})$ and denote $\wbH=\wbH(\bthe)$, $\wbnabla=\wbnabla(\bthe)$.
  \item \textbf{Step 4.1.} Find the feasible direction as 
    $$\betta=\,-\,\left(\identidad- \wbH^{-1}
      \bA\trasp\left(\bA\wbH^{-1}\bA\trasp\right)^{-1}\bA\right)\wbH^{-1}
    \wbnabla$$
  \item \textbf{Step 4.2.} If $\|\betta\|<\epsilon$ for some $\epsilon>0$  small enough, compute the Lagrange multipliers
    $$\bmu=\,-\,\left(\bA\wbH^{-1}\bA\trasp\right)^{-1}\bA \wbH^{-1} \wbnabla$$
    Let $\mu_i$ be the $i-$th component of $\bmu$.
    \begin{itemize}
    \item If $\mu_i \ge 0$, for all $i\in \itA$, then $\wbthe=\bthe$.
    \item If there exists at least one $i\in \itA$ such that $\mu_i < 0$, determine the index corresponding to the largest $\mu_i$ and remove it from $\itA$ and go to  \textbf{S1}.
    \end{itemize}
  \item \textbf{Step 4.3} Compute 
    $$\nu_1=\min_{\eta_i>\eta_{i+1}, i\notin\itA, 1\le i\le k_n-1} \frac{-(\lambda_{i+1}-\lambda_i)}{\eta_{i+1}-\eta_i}$$
    and find the smallest $r$ such that $L_n(\bthe+\,2^{-r} \betta, \wkappa)<L_n(\bthe, \wkappa)$. Then replace $\bthe$ by $\wtbthe=\bthe+\min(2^{-r},\nu_1)\betta,$ update $\itA$ and    $\bA$ and go to \textbf{Step 4.1}.
  \end{itemize}
    
\end{itemize}

 The following Lemma states the Fisher--consistency of the functionals  related to the estimators $(\wtbbe_{n},\wteta)$ and  $(\wbbe_{n},\weta)$. Its proof is given in the Appendix A and is a consequence of Lemma 1 in Bianco \textsl{et al.} (2005). 

 \begin{lemma}\label{lema:lema1gam}
  \textsl{If the score function   $\phi : \real \to [0, \infty)$
  is a continuous, non-decreasing and even function such that $\phi(0)=0$. Moreover, if $0 \leq s< v$ with
  $\phi(v) < \sup_s \phi(s)$ then $\phi(s)<\phi(v)$. Assume that, for almost any $t_0$,
  $\prob(\bx\trasp \bbe = c\; \cup\; w(\bx)=0|t=t_0)<1$, for any
  $\bbe\in \mathbb{R}^p$, and $c \in \real$, $(\bbe, c)\ne \bcero$. Then, we have that the functionals related to the estimators  $(\wbbe_{n},\weta)$ are  Fisher--consistent. Furthermore, the functionals related to $(\wtbbe_{n},\wteta)$ are Fisher--consistent when \eqref{eq:identifia} holds.} 
\end{lemma}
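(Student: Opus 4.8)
The plan is to reduce the statement, via conditioning and the independence of the errors in \eqref{eq:modelologgamma}, to the univariate Fisher--consistency result of Lemma 1 in Bianco \textsl{et al.} (2005), and then to recover uniqueness from the identifiability assumptions. First I would make precise the functionals. With transformed responses $z=\log y$, the generalized $MM$--estimator $(\wbbe_n,\weta)$ from Steps 3--4, whose limiting tuning constant is $c_0=\max\{\sigma^{\ast}(\alpha),C_e(\alpha)\}$, is associated with a minimizer over $(\bbe,g)\in\real^p\times\itG$ of
\[
L(\bbe,g,a)=\esp\left[w(\bx)\,\phi\left(\frac{\sqrt{d\left(z-\bx\trasp\bbe-g(t)\right)}}{a}\right)\right]
\]
evaluated at $a=c_0$, whereas the $S$--estimator $(\wtbbe_n,\wteta)$ of Step 1 is associated with a minimizer over $(\bbe,g)\in\real^p\times\itG$ of the scale functional $\sigma(\bbe,g)$, defined by $\esp[\phi(\sqrt{d(z-\bx\trasp\bbe-g(t))}/\sigma)]=b$, now with $w\equiv1$. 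Since $s\mapsto\esp[\phi(\sqrt{d(z-\bx\trasp\bbe-g(t))}/s)]$ is continuous and strictly decreasing and equals $b$ at $s=\sigma^{\ast}(\alpha)$ when $(\bbe,g)=(\bbe_0,\eta_0)$, one gets $\sigma(\bbe,g)\ge\sigma^{\ast}(\alpha)$, with equality if and only if $L(\bbe,g,\sigma^{\ast}(\alpha))=L(\bbe_0,\eta_0,\sigma^{\ast}(\alpha))$ computed with $w\equiv1$. Hence in both cases it suffices to show that, for any fixed $a>0$ and the pertinent weight $w$, $(\bbe_0,\eta_0)$ is the unique minimizer of $L(\,\cdot\,,\cdot\,,a)$ over $\real^p\times\itG$, which is precisely the Fisher--consistency property \eqref{fisher}.

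Next I would carry out the conditioning step. Under \eqref{eq:modelologgamma} the variable $u=z-\bx\trasp\bbe_0-\eta_0(t)$ is independent of $(\bx,t)$ with density \eqref{eq:gammaden}, so taking conditional expectation with respect to $(\bx,t)$ gives
\[
L(\bbe,g,a)-L(\bbe_0,\eta_0,a)=\esp\Big[w(\bx)\big(\Phi_a\big(\bx\trasp(\bbe-\bbe_0)+g(t)-\eta_0(t)\big)-\Phi_a(0)\big)\Big],
\]
where $\Phi_a(s)=\esp_G[\phi(\sqrt{d(u-s)}/a)]$ with $u$ distributed as in \eqref{eq:gammaden}. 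Now $d(v)=e^{v}-v-1$ is non--negative, strictly convex, and attains its unique minimum value $0$ at $v=0$; $\phi$ is a $\rho$--function; and the log--Gamma density \eqref{eq:gammaden} is log--concave, hence strongly unimodal with mode at $u=0$. These are exactly the conditions under which Lemma 1 in Bianco \textsl{et al.} (2005) applies and yields $\Phi_a(s)\ge\Phi_a(0)$ for every $s$, with equality only at $s=0$; consequently $L(\bbe,g,a)\ge L(\bbe_0,\eta_0,a)$.

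Finally I would extract uniqueness. If $L(\bbe,g,a)=L(\bbe_0,\eta_0,a)$ then $w(\bx)\big(\Phi_a(\bx\trasp(\bbe-\bbe_0)+g(t)-\eta_0(t))-\Phi_a(0)\big)=0$ almost surely, so for almost every $t_0$, a.s.\ on $\{t=t_0\}$ either $w(\bx)=0$ or $\bx\trasp(\bbe-\bbe_0)=\eta_0(t_0)-g(t_0)$; that is, $\prob(\bx\trasp\bbe^{\ast}=c\,\cup\,w(\bx)=0\mid t=t_0)=1$ with $\bbe^{\ast}=\bbe-\bbe_0$ and $c=\eta_0(t_0)-g(t_0)$. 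The standing hypothesis of the Lemma then forces $(\bbe^{\ast},c)=\bcero$, hence $\bbe=\bbe_0$ and $g(t_0)=\eta_0(t_0)$ for almost every $t_0$, i.e.\ $\bbe=\bbe_0$ and $\|g-\eta_0\|_{\itF}=0$; this establishes the claim for $(\wbbe_n,\weta)$. For $(\wtbbe_n,\wteta)$ the same reasoning applies with $w\equiv1$, in which case the displayed identifiability requirement reduces to \eqref{eq:identifia}, and the reduction of the first paragraph turns this into Fisher--consistency of the scale functional. The step I expect to be the main obstacle is the pointwise inequality for $\Phi_a$ borrowed from Bianco \textsl{et al.} (2005): it hinges on the interplay between the strict convexity of the deviance $d$ and the strong unimodality of the log--Gamma error density, which is why that reference is invoked; the remaining ingredients --- passing from the scale functional to the loss functional, the conditioning identity, and the identifiability argument --- are routine.
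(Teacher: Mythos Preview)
Your proposal is correct and follows essentially the same route as the paper: reduce to the univariate deviance problem by conditioning on $(\bx,t)$ and using the independence of the log--Gamma errors, invoke Lemma~1 of Bianco \textsl{et al.}\ (2005) to obtain the strict pointwise inequality $\Phi_a(s)>\Phi_a(0)$ for $s\ne 0$, and then apply the identifiability hypothesis to force $\bbe=\bbe_0$ and $g=\eta_0$. The paper organizes the same argument by splitting the expectation over the set $\itA_0=\{\Upsilon(\bx,t)=0\}$ and its complement (see Lemma~\ref{lema:lema2}), but the content is identical. Your explicit reduction of the $S$--functional case to the loss--functional case via monotonicity of $s\mapsto\esp[\phi(\sqrt{d(u)}/s)]$ is a detail the paper leaves implicit, so your write--up is in fact slightly more complete on that point.
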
 

\section{Monte Carlo study}\label{montecarlo}

In this Section, we summarize the results of a simulation study designed to  compare the performance of the proposed estimators with the classical ones under a  log--Gamma partly linear isotonic regression model. In all Tables, the estimators  in this paper are indicated as \textsc{rob} while  their classical counterparts are indicated as \textsc{cl}, since they correspond to the estimators based on the deviance. To be more precise, the robust estimators correspond to those controlling large values of the deviance as described in Section \ref{gamacaso} and they were computed using the Tukey's
biweight score function. The weight functions $w$ used to control high leverage points was taken $w$ used to control high leverage points was taken as the
Tukey's biweight function with tuning constant $c_w=4.685$ 
\begin{equation}\label{funcionpesow}
  w(x) = \begin{cases}
    \left(1 - \left[\dst\frac{x  - \wmu_n}{c_w\,s_n}\right]^2\right )^{2}     & |x  - \wmu_n|\le c_w s_n \\
    0 & |x  - \wmu_n|\ge c_w\, s_n\;,
  \end{cases}
\end{equation}
with $\wmu_n$ the   median of $x_i$ and $s_n=\mad(x_i)$,  since we have considered $x_i\in \real$. On the other hand, the classical estimators correspond to the choice $\varphi(t)=t$ in \eqref{eq:rhobia} and $w\equiv 1$. 

We have performed $NR=1000$ replications with samples of  size $n=100$. The value of  $k_n$ was chosen as described in Section \ref{sec:BIC}.
The central model denoted $C_0$ in Tables  corresponds to select $(x_i,t_i)$ independent of each other such that $x_i\sim \mbox{N}(0,1)$, $ t_i\sim \itU(0,1)$. The response variable was generated as $y_i|(x_i,t_i)\sim \Gamma(3,\lambda_i)$, where
$$\esp\left(y_i|(x_i,t_i)\right)=\frac{3}{\lambda_i}= \exp\{\beta_{0} x_i + \eta_{0}(t_i) \}\,$$
 with ${\beta_{0}} = 2$.  Hence, the transformed log--Gamma model is
 $$
 z_i= \beta_{0} x_i + \eta_{0}(t_i) +u_i\,,$$
where $u_i\sim \log(\Gamma(3,1))$. Two choices for the nonparametric component have been considered,  ${\eta_{0,1}}(t) = \sin(  \pi t/2)$ and  ${\eta_{0,2}}(t) = \pi\, t+ 0.25\, \sin( 4 \pi t)$ which leads to Models 1 and 2, respectively. 

For each sample generated, we have considered three contaminations labelled $C_1$, $C_2$ and $C_3$ that lead to contaminated samples $(z_{i,c}, x_{i,c}, t_i)$. We have first generated a sample $v_{i} \sim {\cal U}(0,1)$ for $1\leq i \leq n$ and then, we have considered the following contamination scheme:

\begin{itemize}
\item $C_1$ introduces \textsl{bad} high leverage points in the  carriers $x$, without changing the responses already generated,   i.e., $z_{i,c}=z_i$, $1\le i \le n$, while
  $$ x_{i,c}=  
  \begin{cases}
    x_{i} & \text{if } v_{i}\leq0.90\\
      x_{i}^{\star}  & \text{if } v_{i}>0.90\,,    
  \end{cases} 
  $$
   where $x_{i}^{\star}\sim \text{N}\left(5, 1/{16}\right)$.
\item $C_2$ introduces outlying observations in the responses  generated according to the model   but with an incorrect carrier $x$.  
  $$ z_{i,c}=
  \begin{cases}
    z_{i} & \text{if } v_{i}\leq0.90\\
    z_{i}^{\star}& \text{if }v_{i}>0.90 
  \end{cases}  
  $$
  where $z_{i}^{\star}={\beta_{0}} x_i^{\star}  +{\eta_{0}}(t_i)+ u_i^{\star}$ with $ u_i^{\star} \sim \log(\Gamma(3,1))$ and $x_{i}^{\star}$   a new observation from   a $\mbox{N}\left(5,1/{16}\right)$. 
 Note that the carriers are  not contaminated in this situation, i.e., $x_{i,c}=x_i$.
  
\item $C_3$ corresponds to increasing the variance of the carriers $x$   and also to introduce large values on the responses  
 \begin{eqnarray*}
  x_{i,c}=
  \begin{cases}
    x_{i}& \text{if } v_{i}\leq 0.90\\
    x_{i}^{\star} & \text{if }   v_{i}>0.90,
  \end{cases}
 &\qquad   &  z_{i,c}=
  \begin{cases}
    z_{i} & \text{if } v_{i}\leq 0.90\\
    z_{i}^{\star}& \text{if }  v_{i}>0.90\,,
  \end{cases}
\end{eqnarray*}
  where $x_i^{\star}$ is a new observation from a  $\text{N}(0,25)$ and $z_{i}^{\star}=3\,\log(10)+  u_i^{\star}$ with $ u_i^{\star} \sim \log(\Gamma(3,1))$
\end{itemize}
Table \ref{tab:best-gamma2} summarize the  obtained results and report the mean over replication of $\wbeta-\beta_0$, denoted $\mbox{bias}(\wbeta)$, its standard deviation denoted $\sd(\wbeta)$ and the mean square error, that is, the mean over replications of  $(\wbeta-\beta_0)^2$. To study the performance of the estimators of the regression function $\eta_0$, denoted  $\weta$, we have considered the mean square error  ($ \MISE(\weta)$), i.e, the mean over replications of an approximation of the integrated square error (ISE) given by
$$
{\mbox{ISE}}(\widehat{\eta}) = n^{-1} \sum_{i=1}^{n}\,\left[\weta(t_i)-\eta_0(t_i)\right]^2  \,.$$

\small
\begin{table}[ht!]
  \centering
  \setlength{\tabcolsep}{3pt} 
  \begin{tabular}{c|c|rrrr|rrrr}
    \hline                                              
    \rule{0pt}{2ex} & & \multicolumn{4}{c|}{Model 1} & \multicolumn{4}{c}{Model 2} \\
    \hline                                                     
    \rule{0pt}{3ex}   
 & Estimator    & $\mbox{Bias}(\wbeta)$    & $\sd(\wbeta)$ & $\MSE(\wbeta)$ & $\MISE(\weta)$ & $\mbox{mean}(\wbeta)$ & $\sd(\wbeta)$ & $\MSE(\wbeta)$ & $\MISE(\weta)$  \\\hline
    {$C_0$} & \textsc{cl}  & 0.0002   & 0.0608 & 0.0037 & 0.0088  & 0.0000   & 0.0636 & 0.0040  & 0.0324  \\
   			& \textsc{rob} & 0.0021   & 0.0672 & 0.0045 & 0.0096  & 0.0019 & 0.0700 & 0.0049 & 0.0340   \\
    \hline
    {$C_1$} & \textsc{cl}  & -0.5497 & 0.2170  & 0.3492 & 0.0265  & -0.5549 & 0.2215 & 0.3570  & 0.0556  \\
   			& \textsc{rob} & -0.0016 & 0.0706 & 0.0050   & 0.0100 & -0.0020 & 0.0728 & 0.0053  & 0.0344  \\
    \hline
    {$C_2$} & \textsc{cl}  &  -1.8359 & 0.9343 & 4.2426   & 54.3390 & -1.8168 & 0.9665 & 4.2340   & 52.8369  \\
   			& \textsc{rob} & 0.0002 & 0.0711 & 0.0051  & 0.0103 & -0.0001 & 0.0736 & 0.0054   & 0.0348   \\
    \hline
    {$C_3$} & \textsc{cl}  & -1.9400   & 0.2721 & 3.8376 & 15.0401 & -1.9116 & 0.2581 & 3.7207 & 10.1817 \\
   			& \textsc{rob} &0.0043 & 0.0727 & 0.0053  & 0.0146 & 0.0020 & 0.0749 & 0.0056  & 0.0350  \\    
\hline
    \end{tabular}
  \caption{\small\label{tab:best-gamma2} Summary results for the  estimators of $\beta_0$ and  $\eta_0$, under a Gamma model.
    The estimators are obtained when $k_n$ is the data--driven number of knots that minimizes $BIC(k)$.}
\end{table}

The classical estimator shows its sensitivity under all contaminations, the effect being worst in this case on the estimation of the regression function $\eta_0$ when contaminating the responses as in $C_2$ or  $C_3$. For these two contamination the mean square errors of the classical estimators of $\eta_0$ are more than  one thousand times  those obtained by the robust procedure which are quite close to the corresponding ones under $C_0$. On the other hand, contaminating only on the carriers duplicates of the mean square error of the classical estimators $\weta_{\cl}$. Therefore, as expected large responses affect the estimators of the nonparametric component more than leverage points.  It is worth {noting} that for the studied log--Gamma model,  both the bias and the dispersion  of the classical estimators of $\beta_0$ are increased under
$C_2$ enlarging  the mean square error. On the other hand, the increased mean square error obtained under $C_3$ is mainly due to the bias. The effect of the different contaminations is also striking in Figures \ref{fig:boxplot_modelo1_best} and  \ref{fig:boxplot_modelo2_best} which gives the boxplots of $\wbbe$ under Models 1 and 2, respectively. For instance, under $C_1$  and $C_3$, the whole boxplot is under the horizontal line which corresponds to the
true value $\beta_0=2$. On the other hand, the robust estimators are quite stable across all   contaminated scenarios.  Furthermore, the stability of the robust procedure is clearly illustrated in Figure \ref{fig:densi_modelo1_best} which plots the density estimators of $\wbeta_{\cl}$ and  $\wbeta_{\rob}$ under the different contamination schemes. The solid black lines correspond to the uncontaminated samples, while the red dashed, the blue dotted and the maroon dashed-dotted lines to contaminations $C_1$ to $C_3$ respectively. Besides, the dashed  green line corresponds to the normal density with mean 2 and standard deviation equal to 0.0608 and 0.0672 for the classical and robust estimators, respectively. Note that these values correspond to $\sd(\wbbe)$ reported in Table \ref{tab:best-gamma2},  for clean samples. For the robust estimators all the density estimators are over-imposed showing that the contaminations have a mild effect on the estimations. On the other hand, when using the classical procedure based on the deviance, the densities of the estimators computed with contaminated samples move away from that obtained when clean data are considered, leading to unreliable estimates.

\begin{figure}[ht!]
  \centering
  \subfigure[$C_0$]{\includegraphics[width=.45\textwidth]{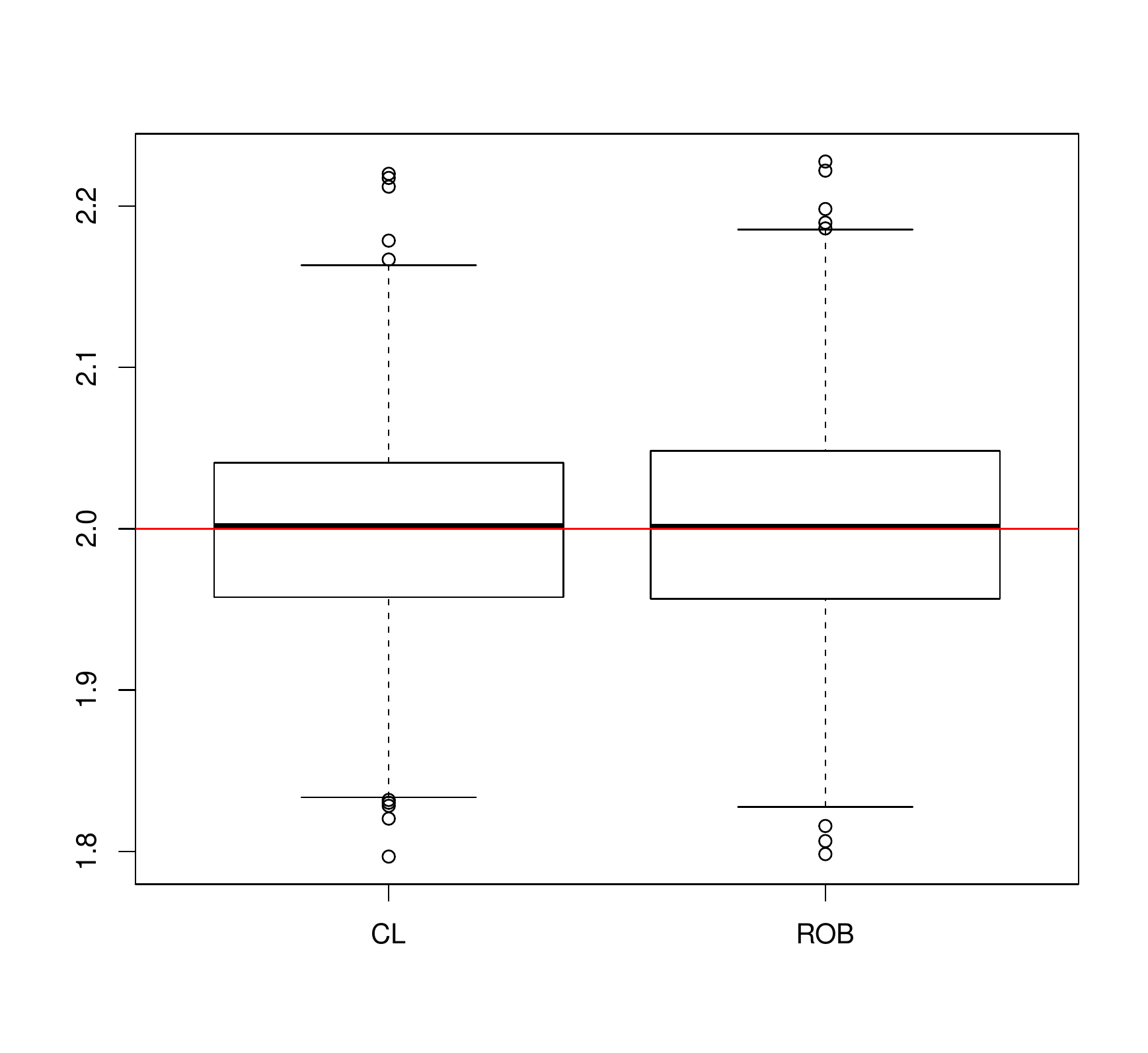}}
  \subfigure[$C_1$]{\includegraphics[width=.45\textwidth]{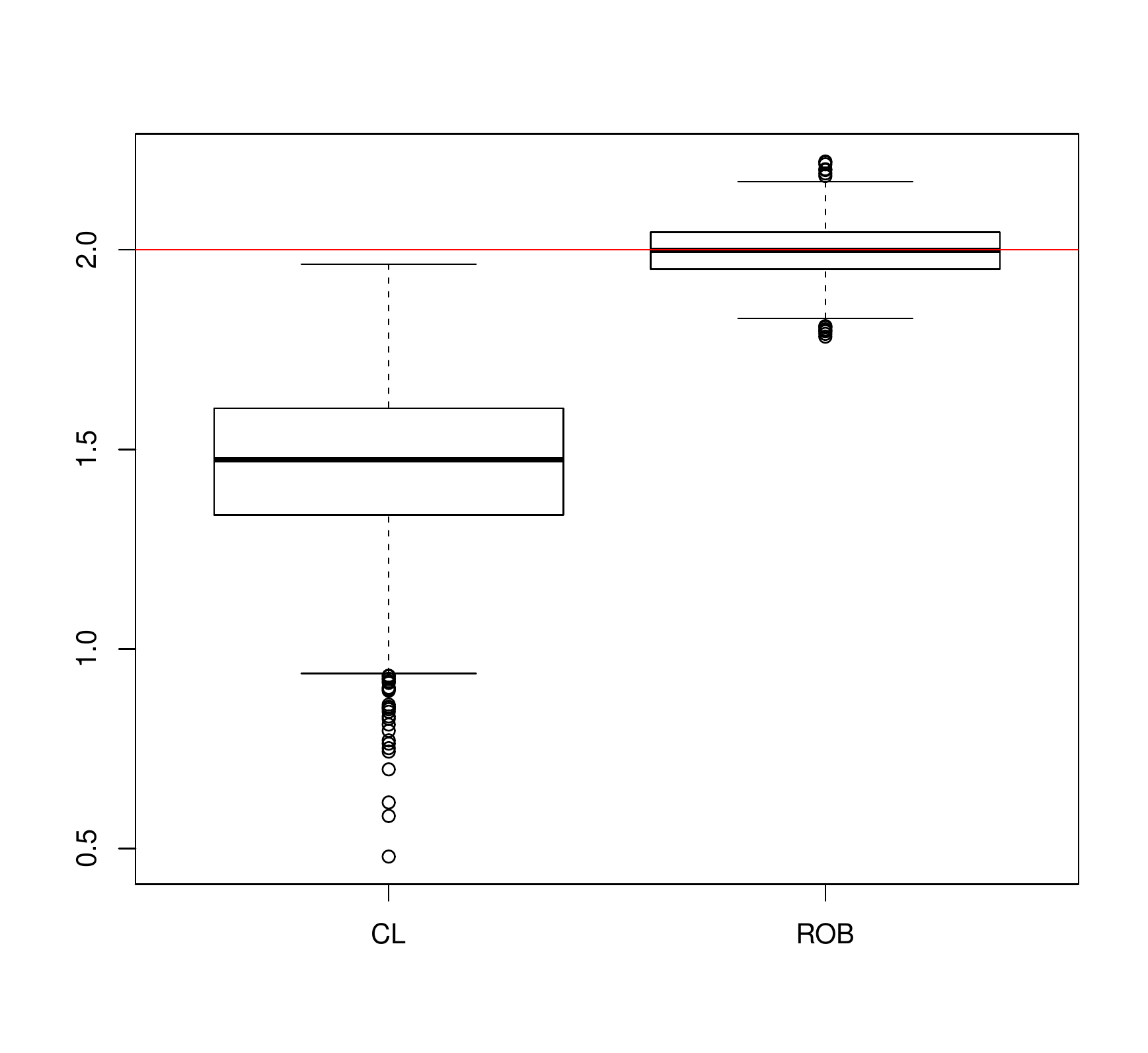}}
  \subfigure[$C_2$]{\includegraphics[width=.45\textwidth]{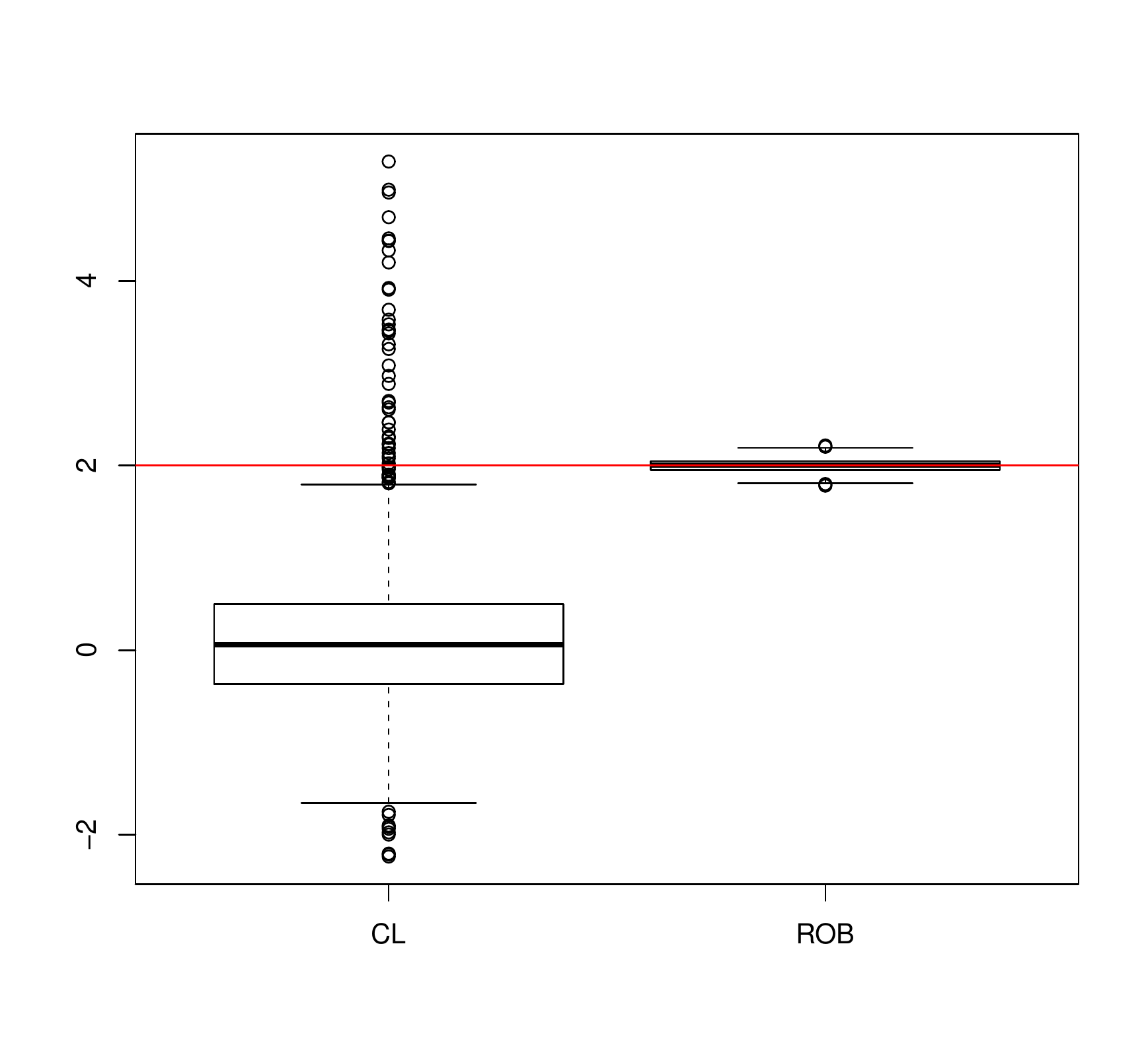}}
  \subfigure[$C_3$]{\includegraphics[width=.45\textwidth]{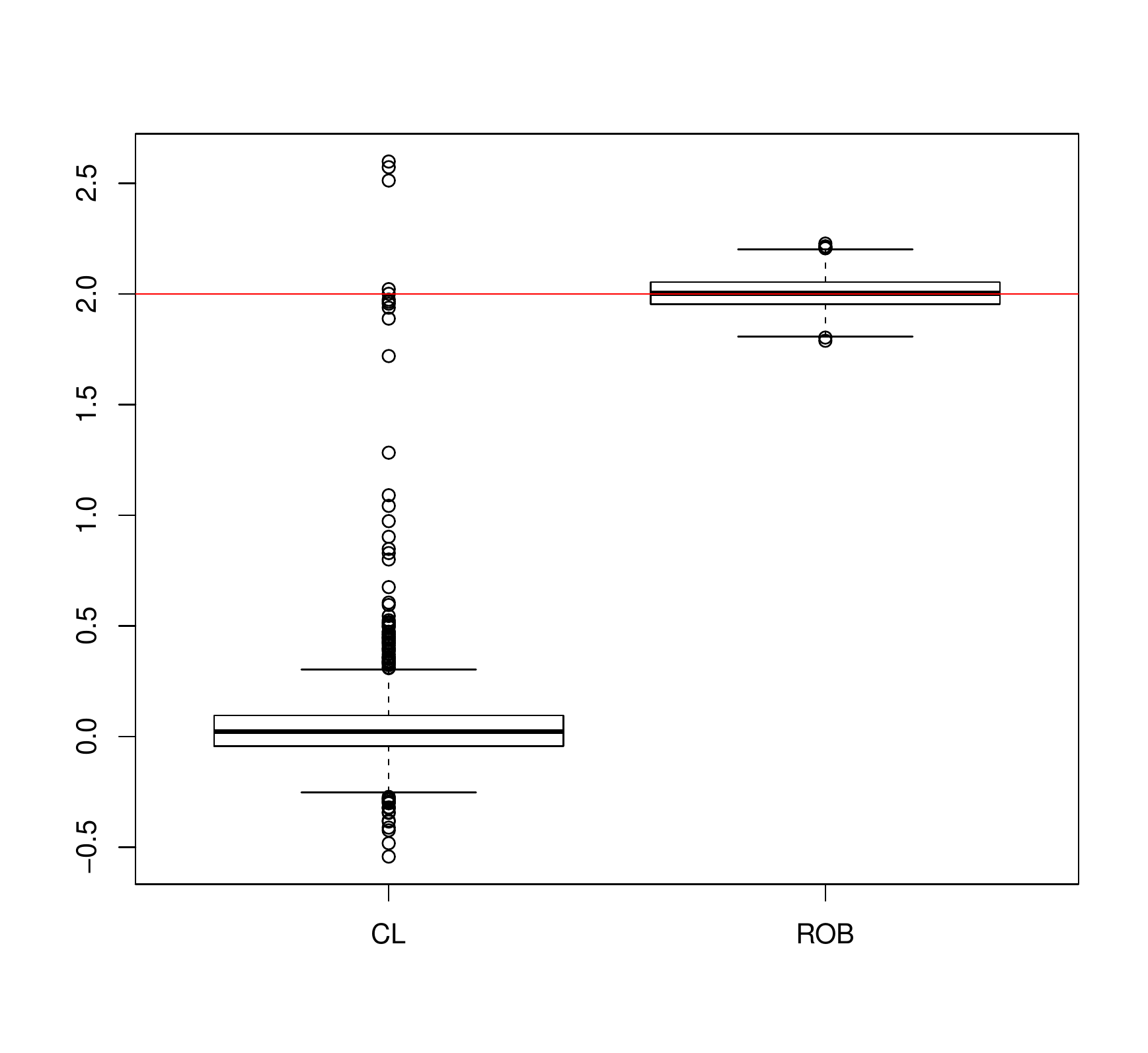}}
  \caption{\small \label{fig:boxplot_modelo1_best} Boxplots of the
    estimators $\wbeta$ of $\beta_0$, under a log--Gamma Model with
    $\eta_0=\eta_{0,1}$.}
\end{figure}

\begin{figure}[ht!]
  \centering
  \subfigure[$C_0$]{\includegraphics[width=.45\textwidth]{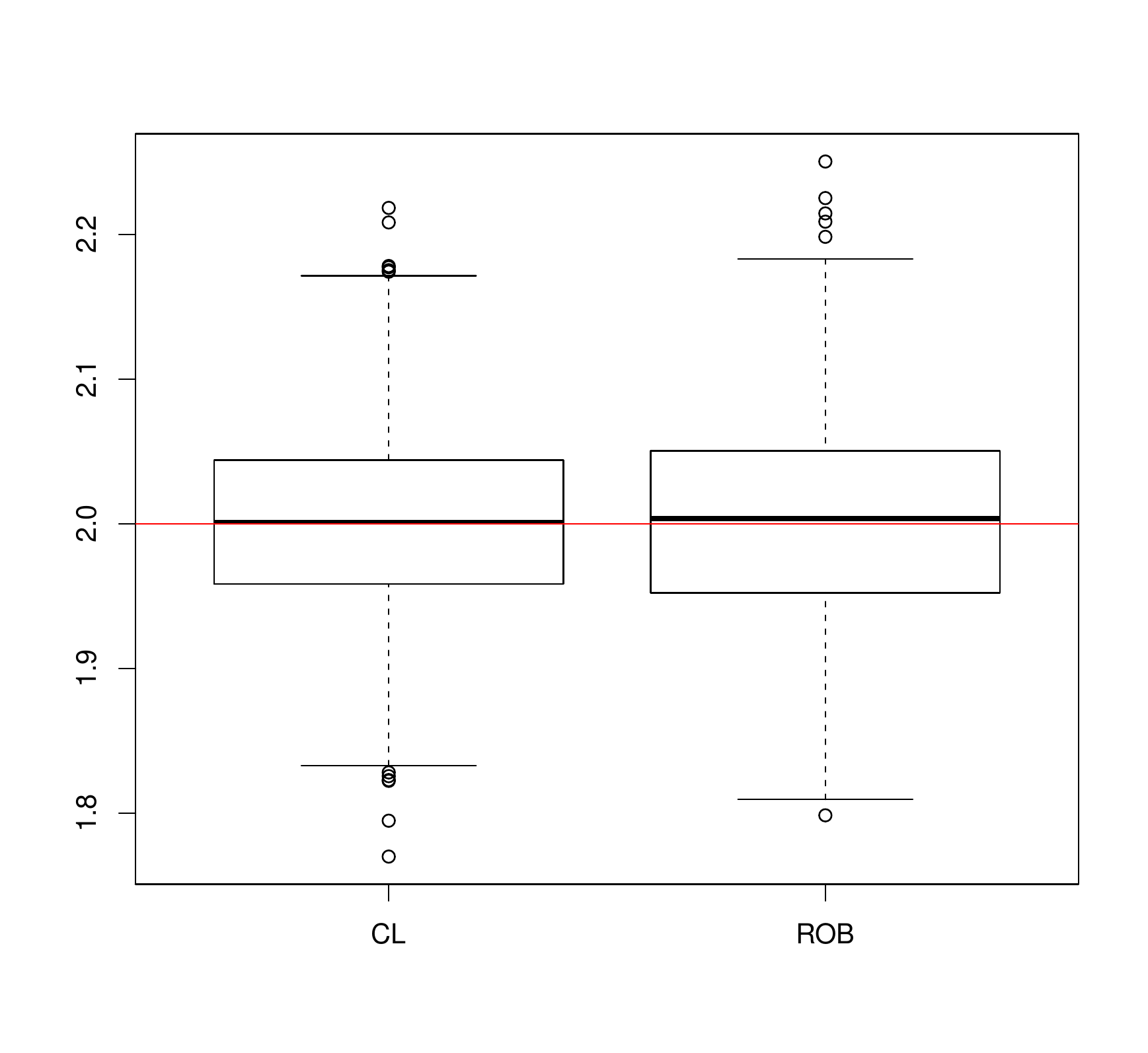}}
  \subfigure[$C_1$]{\includegraphics[width=.45\textwidth]{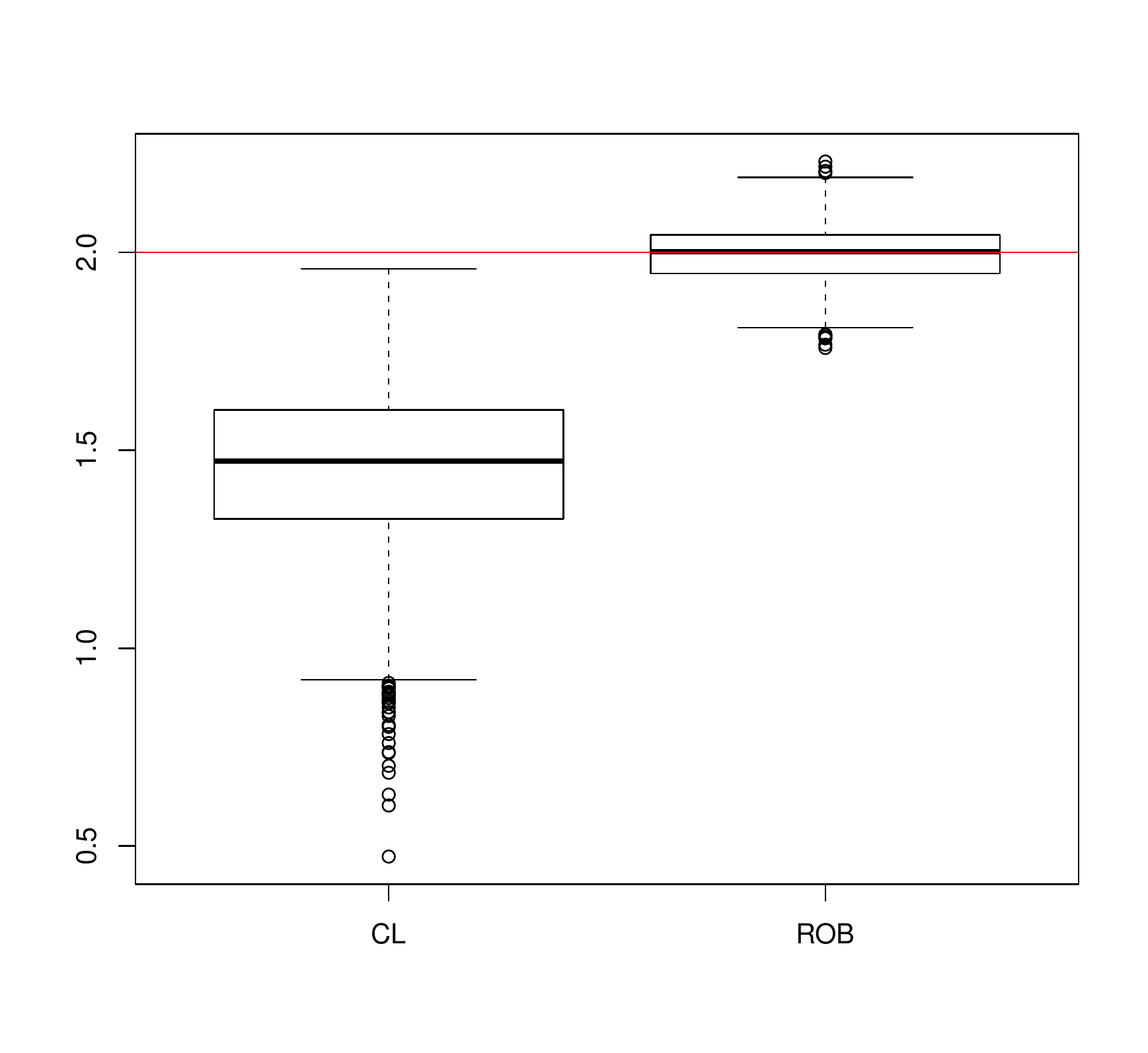}}
  \subfigure[$C_2$]{\includegraphics[width=.45\textwidth]{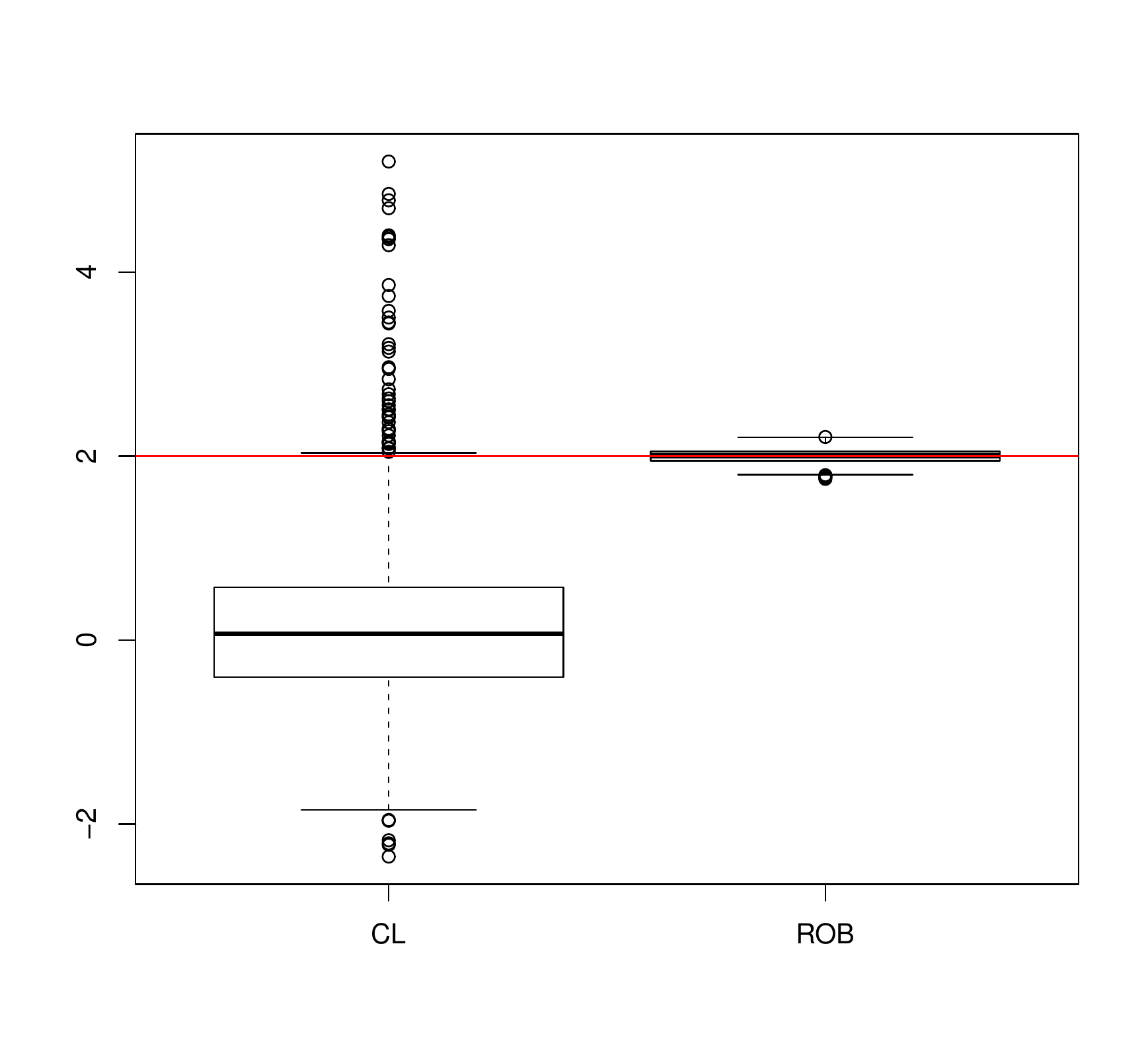}}
  \subfigure[$C_3$]{\includegraphics[width=.45\textwidth]{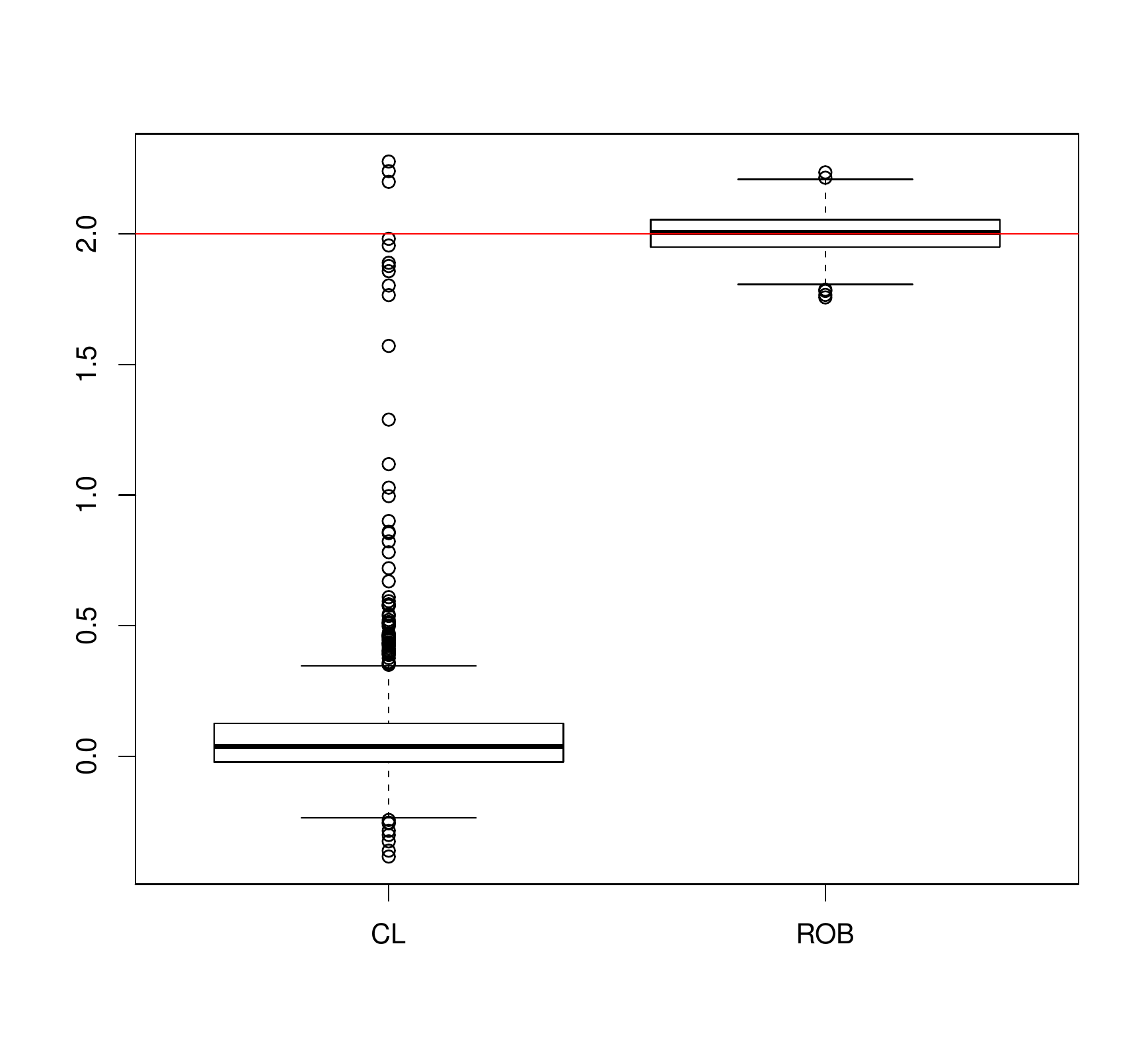}}
  \caption{\small \label{fig:boxplot_modelo2_best} Boxplots of the
    estimators $\wbeta$ of $\beta_0$, under a log--Gamma Model with
    $\eta_0=\eta_{0,2}$.}
\end{figure}

\begin{figure}[ht!]
  \centering
  \subfigure[Classical]{\includegraphics[width=.45\textwidth]{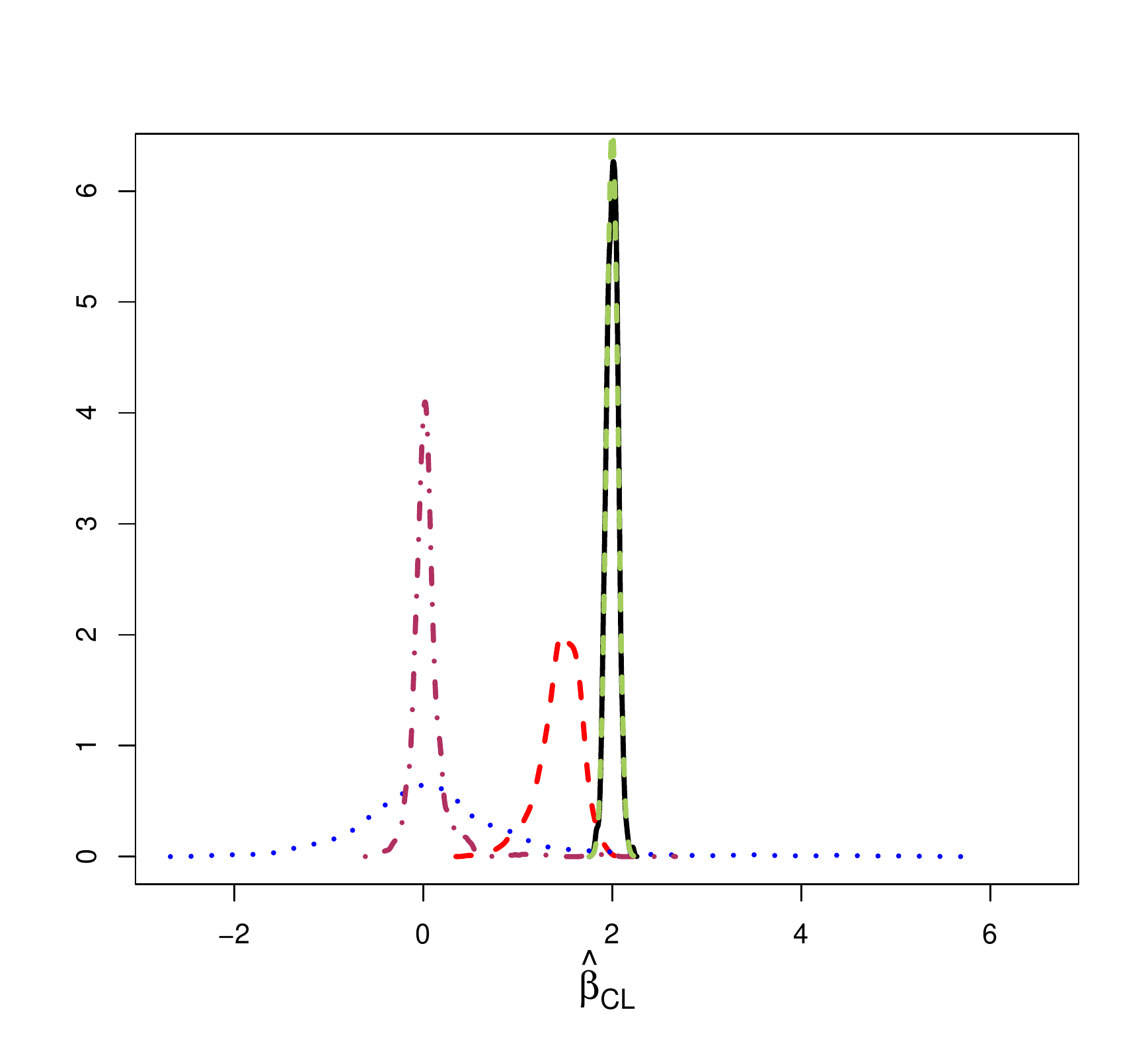}}
  \subfigure[Robust]{\includegraphics[width=.45\textwidth]{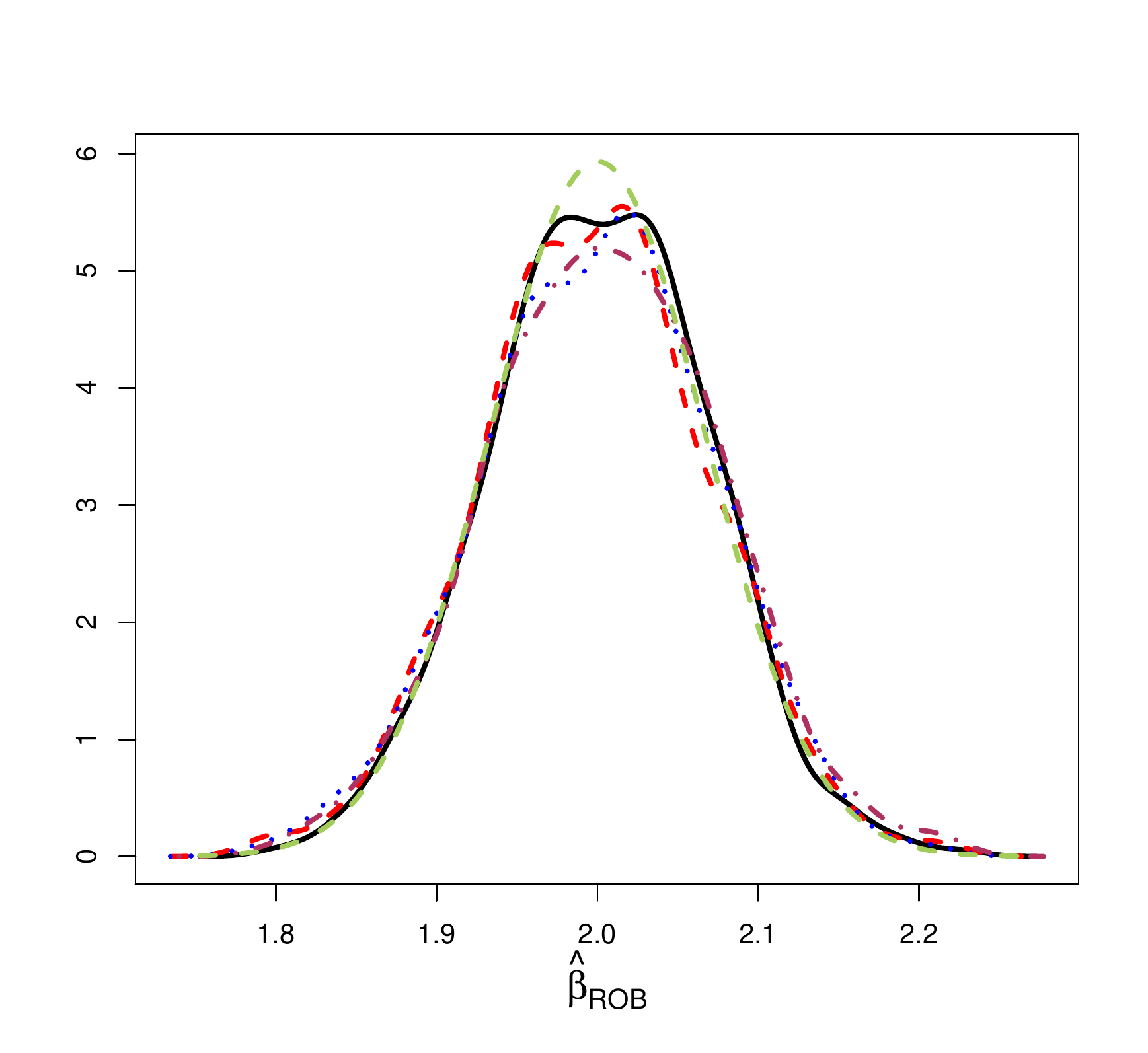}}
  \caption{\small \label{fig:densi_modelo1_best} Density estimator of
    the classical and robust estimators, $\wbeta_{\cl}$ and
    $\wbeta_{\rob}$, of $\beta_0$, under a log--Gamma Model with
    $\eta_0=\eta_{0,1}$.  The solid black lines correspond to the
    uncontaminated samples, while the red dashed, the blue dotted and
    the maroon dashed-dotted lines to contaminations $C_1$ to $C_3$
    respectively.}
\end{figure}

\section{Real data example: Hospital Costs Data}{\label{cost}}
Marazzi and Yohai (2004) introduced a data set that corresponds to the costs of $100$ patients in a Swiss hospital in 1999 for \textit{medical back problems}. They concerned on the relationship between the hospital cost of stay, $y$, (Cost, in Swiss francs) and the following administrative explanatory variables:

\begin{itemize} 
\item $LOS$: length of stay in days
\item $ADM$: admission type (0 = planned; 1 = emergency)
\item $INS$: insurance type (0 = regular; 1 = private)
\item $AGE$: years
\item $SEX$: (0 = female; 1 = male)
\item $DEST$: discharge destination (1 = home; 0 = other)
\end{itemize}

Cantoni and Ronchetti (2006) fitted to the complete data set the model $\log(\esp(y_i|\bx_i))= \bgama_0\trasp\bx_i$ which for Gamma responses is equivalent to
$ z_i=\log(y_{i})= \bgama_0\trasp\bx_i + u_{i}$,
where $u_{i} \sim \log \Gamma(\alpha,1)$ and  $\bx=(ADM, INS , AGE,$ $ SEX,DEST , \log(LOS),1)$. Using their robust proposal, they identified $5$ outliers corresponding to observations labelled as $14, 21, 28, 44$ and $63$, whose weights are less or equal than 0.5. They realized that the atypical points affected the classical estimates of the coefficient of variable $INS$ and the shape parameter. Bianco \textsl{et al.} (2013b) also analysed this data set to perform tests for the covariates $SEX$ and $DEST$.

In this example, we do not impose a linear relation between $z_i=\log(y_i)$ and the $\log LOS$ but we consider the more general isotonic partial linear model
$$z_i=\bbe_0\trasp \bx_i+\eta_0(t_i)+u_i$$
where $u_{i}$ has log $\Gamma(\alpha,1)$ and $\bx=( ADM, INS , AGE, SEX,DEST)$, while $t= \log(LOS)$ and $\eta_0$ is non--decreasing. The monotone assumption on $\eta_0$ is natural in this example, since the hospital cost increases  the longer the stay. The obtained results for the estimators of $\bbe_0$ are reported in  Table \ref{tab:costos-nuevos}. For the classical estimators, denoted $\wbbe_{\cl}$, the $BIC$ criterion selected $k_n=4$, while for the robust ones, denoted $\wbbe_{\rob}$, the best choice was $k_n=5$ and the tuning constant for the $\rho-$function bounding the deviances equal $c_\rho=0.3515$. As in the linear fit, the classical estimator of $\bbe_0$ are very sensitive to the 5 outliers, which were also detected in our study. In particular, the shape parameter and the coefficient related to the insurance type are highly affected. After removing these 5 data points, the classical  estimators   $\wbbe_{\cl}^{{-\{5 \}}}$ are very similar to those  obtained using $\wbbe_{\rob}$, showing the good performance of the robust proposal in presence of outliers. We have computed the jackknife estimators of the standard deviation for the estimators of $\bbe$ which are reported between brackets.  

Figure \ref{fig:costos} shows the plot for the estimators of $\eta_0$ obtained using the classical (in red) and robust estimators (in blue) together with the linear fit provided by $\wbbe_{\gm}$, i.e., $\eta(t)=0.8892 \, t + 7.1268$. The linear fit seems to be a good choice for this data set, however, some discrepancies appear near the boundary which may be caused   by a different shape of the regression function for large values of the $\log(LOS)$. It is worth noting that in this case, the shape of the classical estimator is quite close to that of the robust one and this can be mainly explained by the isotonic structure imposed.

\begin{table}[ht!]
   \centering
   \renewcommand{\arraystretch}{1.2}
  \begin{tabular}{c r c |r c|r c}\hline
    & \multicolumn{2}{c|}{$\wbbe_{\cl}$} & \multicolumn{2}{c|}{$\wbbe_{\cl}^{-\{5\}}$} & \multicolumn{2}{c}{$\wbbe_{\rob}$} \\\hline
   $ADM$  & 0.2148  & (0.0560) & 0.2172  & (0.0418) & 0.1979  & (0.0294)  \\
    $INS$  & 0.0984  & (0.1308) & -0.0324 & (0.0514) & -0.0207 & (0.0407)  \\
    $AGE$  & -0.0009 & (0.0014) & -0.0016 & (0.0010) & -0.0019 & (0.0006) \\
    $SEX$  & 0.1088  & (0.0523) & 0.0820  & (0.0352) & 0.0615  & (0.0329) \\
    $DEST$ & -0.1358 & (0.0585) & -0.1608 & (0.0499) & -0.1673 & (0.0304)  \\
    $\walfa$ & 21.0809 & -      & 45.7560 & -       & 46.0088 & -        \\
        \hline
  \end{tabular}
  \caption{\label{tab:costos-nuevos} Analysis of Hospital Costs data under a log--Gamma isotonic partly linear regression model.}
\end{table}

\begin{figure}[ht!]
   \centering
  \includegraphics[width=.5\textwidth]{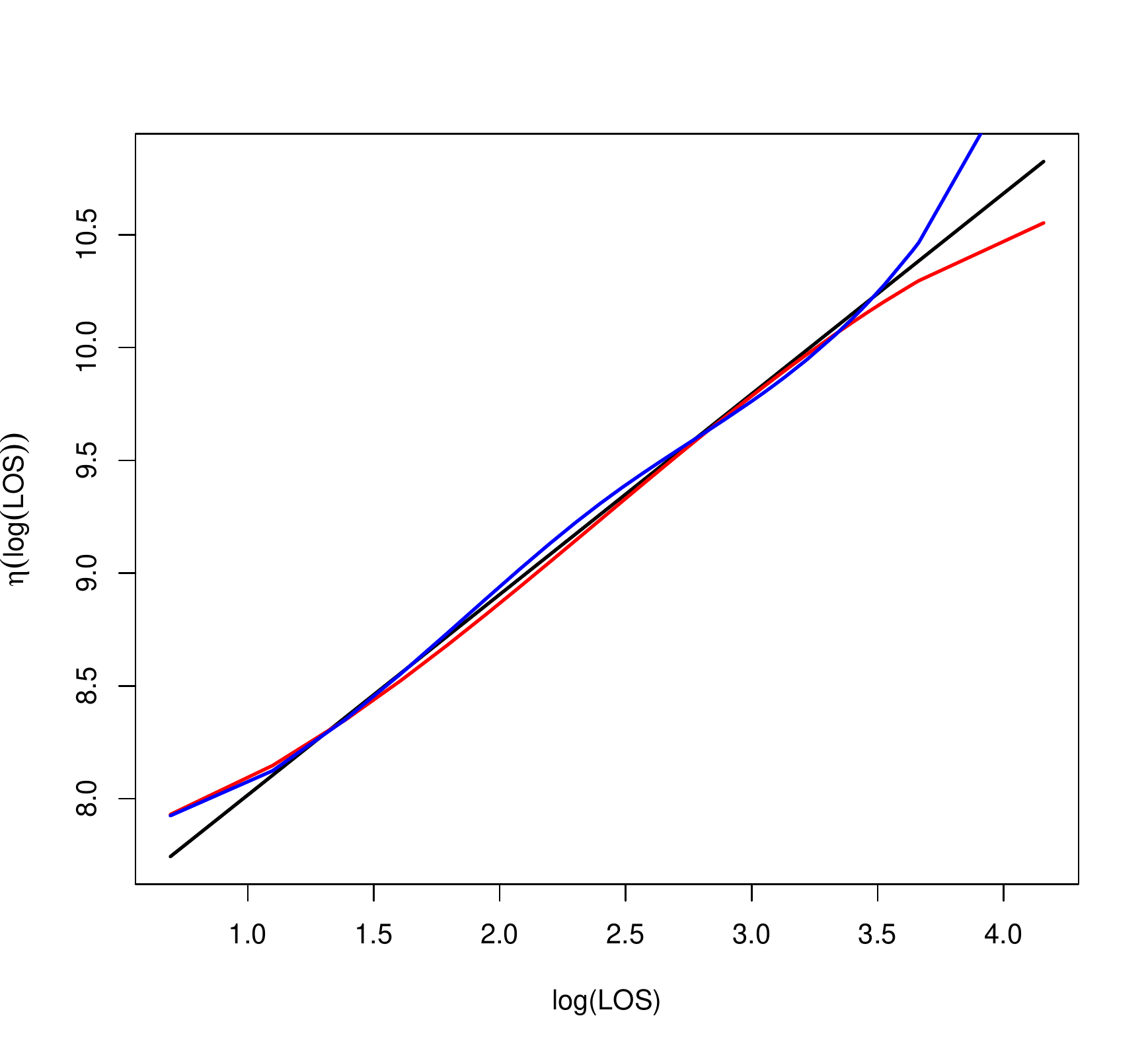} 
  \caption{ \label{fig:costos} Classical (red) and robust (blue) estimators of the  regression function $\eta(t)$ with the linear fit (black).}
\end{figure}

\section{Final comments}{\label{sec:comment}}
 The problem of estimating the nonparametric component   $\eta_0$ and the regression parameter $\bbe_0$ under a generalized partly linear model  has been extensively studied. Among other methods, $B-$splines have been considered to approximate the unknown function $\eta_0$.  One advantage of $B-$splines is that they provide an estimation procedure that can be extended to the situation in which   there are monotone constraints on the nonparametric component by imposing non--decreasing constraints on the coefficients.  To overcome the sensitivity to atypical responses of the classical procedure based on the deviance, we have introduced a family of robust estimators for the   components of a generalized partly linear model
based on monotone $B-$splines, using a bounded loss function   to control large deviance residuals. One of the advantages of our proposal  is that it also allows for an unknown nuisance parameter, such as the scale parameter in partly linear regression models or the shape parameter in a Log--Gamma partly linear regression setting.  Estimation of the nuisance parameter is an important issue since it allows   to calibrate the robust estimators and to down--weight large residuals. Indeed, as in linear regression,  to decide if an observation is an outlier it is necessary to determine  the size of the residuals which strongly depends on the nuisance parameter estimator. 

 The obtained estimators are consistent and rates of convergence are also derived.  The inadequate behaviour of the classical method when atypical data arise in the sample  is   confirmed through our  simulation results. On the other hand, the robust procedure gives more reliable estimators leading to almost  results either under the central log--Gamma model or under the  studied contaminations.


\setcounter{equation}{0}
\renewcommand{\theequation}{A.\arabic{equation}}
\section{Appendix A: Fisher--consistency}

In this section, we discuss conditions ensuring the Fisher--consistency of the proposed estimators, i.e.,  $L(\bbe_0,\eta_0,\kappa_0)=\dst\min_{\bbech\in \real^p, g\in \itG}L(\bbe,g,\kappa_0)$ where $L(\bbe,g,a)$ is defined in (\ref{eq:objfunction}).

\subsection{The logistic case}{\label{sec:Fisherlogit}}
Let us first consider the situation of a logistic partially linear isotonic model. In this case, the loss function $\rho$ given in (\ref{eq:rhobia}) can be written as
\begin{equation}
 \rho(y, u)=y\varphi\left(\,-\,\log\left[H(u)\right]\right)+(1-y)\varphi\left(\,-\,\log\left[1-H(u)\right]\right) + G(H(u)) \,,
\label{phiBY}
\end{equation}
 with $G(t)=G_1(t) + G_1(1 - t)$,  $G_1(t) = \int_0^t \varphi^{\prime}(-\log u) \, du$ and $H(u)= 1/({1+\exp\left(-u\right)})$. 
 
More generally, we have the following results

\begin{lemma}\label{theo:BY_fisher_consistency} 
Let $\rho:\real^2\to \real$ be defined as in    (\ref{phiBY}) where the function  $\varphi:\real_{\ge 0}\to \real$ is such that $\varphi(0) = 0$ and 
\begin{enumerate}
\item[a)] $\varphi: \real_{\geq 0} \to \real_{\geq 0}$ is bounded with continuous and bounded derivative $\varphi^{\prime}$.
\item[b)] $\varphi^{\prime}(t) \geq 0$ and there exists some $c \geq \log 2$ such that $\varphi^{\prime}(t) > 0$ for all $0 < t < c$.
\end{enumerate}
Furthermore, assume that 
 \begin{equation}
       \prob\left(\bx\trasp\bbe=a_0 \cup w(\bx)=0|t=t_0\right)<1, \qquad \forall  (\bbe,a_0)\ne   \bcero \mbox{ and for almost all $t_0$}.
       \label{eq:identifiafuerte}
     \end{equation}
Then,  $(\bbe_0, \eta_0)$ is the unique minimizer of $L(\bbe, g)$.
\end{lemma}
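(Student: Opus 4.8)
The plan is to reduce the problem to a pointwise (conditional) minimization and then invoke the strict convexity properties that come with the logistic likelihood. First I would write
$$L(\bbe,g)=\esp\bigl[w(\bx)\,\Lambda\bigl(y,\bx\trasp\bbe+g(t)\bigr)\bigr],\qquad \Lambda(y,u)=y\varphi(-\log H(u))+(1-y)\varphi(-\log(1-H(u)))+G(H(u)),$$
and take conditional expectation with respect to $(\bx,t)$. Writing $\mu_0(\bx,t)=H(\bx\trasp\bbe_0+\eta_0(t))$ for the true conditional mean, the inner conditional expectation given $(\bx,t)$ becomes a function of the scalar argument $u=\bx\trasp\bbe+g(t)$ of the form
$$\Phi(u;p)=p\,\varphi(-\log H(u))+(1-p)\,\varphi(-\log(1-H(u)))+G(H(u)),\qquad p=\mu_0(\bx,t).$$
The key analytic fact — which is exactly what Lemma 1 of Bianco and Yohai (1996) gives under hypotheses (a) and (b) on $\varphi$ (the condition $c\ge\log 2$ is what makes the relevant derivative vanish only at the right place) — is that for each fixed $p\in(0,1)$ the map $u\mapsto\Phi(u;p)$ has a unique minimizer, namely $u=H^{-1}(p)$. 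Granting this, the function $(\bbe,g)\mapsto\bbe\trasp\bx+g(t)$ evaluated at the truth already minimizes the integrand pointwise on the set $\{w(\bx)>0\}$, so $L(\bbe_0,\eta_0)\le L(\bbe,g)$ for every $(\bbe,g)\in\real^p\times\itG$; this is the easy direction and needs only $w\ge 0$ and $\eta_0\in\itG$.

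For uniqueness I would argue by contradiction: suppose $(\bbe,g)\in\real^p\times\itG$ also attains the minimum. Then the pointwise uniqueness of the minimizer of $\Phi(\cdot;p)$ forces
$$\bx\trasp\bbe+g(t)=\bx\trasp\bbe_0+\eta_0(t)\qquad\text{for $P_{(\bx,t)}$-almost every }(\bx,t)\text{ with }w(\bx)>0,$$
because otherwise $w(\bx)\Phi(\bx\trasp\bbe+g(t);\mu_0(\bx,t))>w(\bx)\Phi(\bx\trasp\bbe_0+\eta_0(t);\mu_0(\bx,t))$ on a set of positive probability, contradicting equality of the expectations. Now fix a $t_0$ (outside the null set allowed in \eqref{eq:identifiafuerte}); conditionally on $t=t_0$ the identity reads $\bx\trasp(\bbe-\bbe_0)=\eta_0(t_0)-g(t_0)$ on $\{w(\bx)>0\}$, i.e. a linear functional of $\bx$ is almost surely equal to a constant $a_0(t_0)$ on the event $\{w(\bx)>0\}$. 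Taking $(\widetilde\bbe,a_0)=(\bbe-\bbe_0,\eta_0(t_0)-g(t_0))$, assumption \eqref{eq:identifiafuerte} then forces $(\widetilde\bbe,a_0)=\bcero$, hence $\bbe=\bbe_0$ and $g(t_0)=\eta_0(t_0)$ for almost all $t_0$. Since $g$ and $\eta_0$ are both monotone (hence determined by their values on a dense set / up to countably many points, and here actually everywhere by continuity of $\eta_0$ together with monotonicity of $g$), this gives $g=\eta_0$ and completes the proof.

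The main obstacle is the pointwise minimization claim for $\Phi(\cdot;p)$: one must verify that under (a)–(b) the derivative $\partial_u\Phi(u;p)$ vanishes only at $u=H^{-1}(p)$ and changes sign there, i.e. that the correction term $G$ (whose derivative is built from $\varphi'$ precisely to enforce Fisher consistency) exactly cancels the score bias and leaves a function with a single critical point. This is handled by Lemma 1 of Bianco and Yohai (1996), which is why the hypotheses on $\varphi$ here are tailored to match theirs; I would simply cite it rather than redo the calculus. A minor secondary point is the passage from "$g(t_0)=\eta_0(t_0)$ for almost all $t_0$" to "$g=\eta_0$", which uses that a monotone function has at most countably many discontinuities together with the continuity of $\eta_0$, so agreement a.e. with respect to the law of $t$ (assumed to have appropriate support on $[0,1]$) upgrades to agreement everywhere on the support.
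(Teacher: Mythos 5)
Your proposal is correct and follows essentially the same route as the paper: condition on $(\bx,t)$, invoke the Bianco--Yohai lemma (Lemma 2.1 there, your ``Lemma 1'') giving that the conditional expected loss $M(\pi_0,\pi)$ is uniquely minimized at $\pi=\pi_0$, and then conclude via \eqref{eq:identifiafuerte}. Your write-up merely spells out the final identification step (that $\bx\trasp(\bbe-\bbe_0)=\eta_0(t_0)-g(t_0)$ on $\{w(\bx)>0\}$ forces $\bbe=\bbe_0$ and $g=\eta_0$ a.e., upgraded by monotonicity), which the paper dispatches with ``follows now easily.''
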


\begin{proof}
The proof is a direct consequence of  Lemma 2.1 in Bianco and Yohai (1996) and \eqref{eq:identifiafuerte}. As in Lemma 2.1 in Bianco and Yohai (1996), let $z$ be a random Bernoulli variable such that $\prob(z=1)=\pi_0$ and define
$$M(\pi_0,\pi)=\esp z \varphi\left(\,-\,\log \pi\right)+(1-z)\varphi\left(\,-\,\log\left[1-\pi\right]\right) + G(\pi)\,.$$
Then we have that $M(\pi_0,\pi_0)<M(\pi_0,\pi)$ for any $\pi\ne \pi_0$.
 Taking conditional expectation, and noticing that $\prob( y=1|(\bx,t))=H(\bx\trasp\bbe_0+\eta_0(t))$, we get that
$$\esp \rho(y,\bx\trasp\bbe+g(t)) w(\bx) = \esp w(\bx) M\left[H(\bx\trasp\bbe_0+\eta_0(t)), H(\bx\trasp\bbe+g(t))\right] \,.$$
For a fixed value $(\bx,t)$, denote $\pi= H(\bx\trasp\bbe+g(t)) $ and $\pi_0=H(\bx\trasp\bbe_0+\eta_0(t))$,  the function $M(\pi_0,\pi)$ reaches its unique minimum when $\pi=\pi_0$ and the proof follows now easily from \eqref{eq:identifiafuerte}.
\end{proof}

 \subsection{The partially linear regression model}{\label{sec:FisherPLM}}
 The partially linear model corresponds to the situation in which the link function equals $H(s)=s$. In this case, the model can be written as
 $$y_i=\bx_i\trasp\bbe_0+\eta_0(t_i)+\sigma_0 u_i\,,$$ 
 where $u_i$ are independent of $(\bx_i,t_i)$ and $\sigma_0$ is the scale parameter. 
 
 As mentioned in Section \ref{sec:lossfunctions}, the loss function may be taken as $\rho(y, u,a)=\phi((y-u)/a)$ for an appropriate function $\phi$. Furthermore, the nuisance parameter  $\kappa_0$ plays the role of the scale parameter. In this section, we consider the situation in which the errors have a   symmetric distribution and the function $\phi$ is an even function. 
 
More precisely, to obtain Fisher--consistency results, we will need the following set of assumptions
\begin{enumerate}[label = \textbf{F\arabic*}]
\item\label{ass:densidad-sym} The random variable $u$ has
  a density function $g_0(u)$ that is even, non-increasing in $|u|$, and
  strictly decreasing for $|u|$ in a neighbourhood of $0$.
\item\label{ass:rho} The function $\phi : \real \to [0, \infty)$
  is a continuous, non-decreasing and even function such that $\phi(0)=0$. Moreover, if $0 \leq s< v$ with
  $\phi(v) < \sup_s \phi(s)$ then $\phi(s)<\phi(v)$. When $\phi$ is
  bounded we assume that $ \sup_s \phi(s)=1$.
\item\label{ass:probaX} For almost any $t_0$,
  $\prob(\bx\trasp \bbe = c\; \cup\; w(\bx)=0|t=t_0)<1$, for any
  $\bbe\in \mathbb{R}^p$, and $c \in \real$, $(\bbe, c)\ne \bcero$.
   
\end{enumerate}
 
\vskip0.1in
The following Lemma entails the Fisher--consistency of the proposed estimators.

\begin{lemma}{\label{lema:lema1}}
  Let $\itG_0=\{ g: [0, 1] \to \real \mbox{ measurable}\}$. Under {\ref{ass:densidad-sym}} to {\ref{ass:probaX}}, we
  have that, for any $\sigma>0$, $(\bbe_0,\eta_0)$ is the unique
  minimizer over $\real^p\times \itG_0$ of
  $$L(\bbe,g, a)=\esp \phi\left(\frac{y - \bx\trasp \bbe -g(t)}{a}\right)w(\bx)\,.$$
\end{lemma}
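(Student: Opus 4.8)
The plan is to fix $a>0$, reduce the statement to a one--dimensional location problem by conditioning on $(\bx,t)$, resolve that problem with the layer--cake representation of $\phi$ together with the fact that a symmetric unimodal density assigns the largest mass to a \emph{centred} interval, and finally invoke \ref{ass:probaX} to pass from a pointwise identity to $\bbe=\bbe_0$ and $g=\eta_0$.

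First I would write $y=\bx\trasp\bbe_0+\eta_0(t)+\sigma_0\,u$ with $u$ independent of $(\bx,t)$, and condition on $(\bx,t)$ to get
$$L(\bbe,g,a)=\esp\left[w(\bx)\,\esp\!\left(\phi\!\left(\frac{y-\bx\trasp\bbe-g(t)}{a}\right)\,\Big|\,\bx,t\right)\right]=\esp\big[w(\bx)\,h\big(\delta_{\bbech,g}(\bx,t)\big)\big],$$
where $V=\sigma_0\,u/a$, $h(\delta)=\esp\,\phi(V+\delta)$, and $\delta_{\bbech,g}(\bx,t)=a^{-1}\{\bx\trasp(\bbe_0-\bbe)+\eta_0(t)-g(t)\}$. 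By \ref{ass:densidad-sym} the density $f_V$ of $V$ is even, non--increasing in $|v|$, and strictly decreasing on $[0,\epsilon_1)$ for some $\epsilon_1>0$; in particular $L(\bbe_0,\eta_0,a)=\esp[w(\bx)]\,h(0)$.

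The core of the proof is to show $h(\delta)\ge h(0)$ for every $\delta$, with equality only at $\delta=0$. Using \ref{ass:rho} ($\phi$ even, continuous, non--decreasing on $[0,\infty)$, $\phi(0)=0$), the layer--cake formula $\phi(w)=\int_0^\infty\indica\{s<|w|\}\,d\phi(s)$ gives $h(\delta)=\int_0^\infty\prob(|V+\delta|>s)\,d\phi(s)$ (finite, e.g.\ whenever $\phi$ is bounded, and in general under the mild extra requirement $L(\bbe_0,\eta_0,a)<\infty$). For each $s\ge0$, $\prob(|V+\delta|\le s)$ is the mass $f_V$ gives to an interval of length $2s$ centred at $-\delta$; writing $I(c)=\int_{c-s}^{c+s}f_V(v)\,dv$ one has $I'(c)=f_V(c+s)-f_V(c-s)$, whose sign equals that of $|c-s|-|c+s|$, so $I$ is maximised at $c=0$, whence $\prob(|V+\delta|>s)\ge\prob(|V|>s)$ and $h(\delta)\ge h(0)$. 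For strictness, fix $\delta\ne0$; by symmetry of $f_V$ we may assume $\delta>0$. By \ref{ass:rho}, $\phi$ is strictly increasing on $[0,c^\star)$ with $c^\star=\inf\{v:\phi(v)=\sup_s\phi(s)\}>0$, so $d\phi$ charges every subinterval of $(0,c^\star)$. With $s_0=\tfrac{1}{2}\min(c^\star,\epsilon_1)>0$ and $s\in(0,s_0)$, the identity $I(0)-I(-\delta)=\int_{-\delta}^{0}\{f_V(c+s)-f_V(c-s)\}\,dc$ has a strictly positive integrand for $c$ in a subinterval of $(-\delta,0)$ on which $|c+s|<|c-s|$, with both lying in the strictly decreasing range $[0,\epsilon_1)$ of $f_V$; hence $\prob(|V+\delta|>s)>\prob(|V|>s)$ for $s\in(0,s_0)$, and integrating against $d\phi$ over $(0,s_0)$ yields $h(\delta)-h(0)>0$.

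Finally, since $w\ge0$, the integrand $w(\bx)\,h(\delta_{\bbech,g}(\bx,t))$ dominates $w(\bx)\,h(0)$ pointwise, so $L(\bbe,g,a)\ge L(\bbe_0,\eta_0,a)$, and equality forces $w(\bx)\{h(\delta_{\bbech,g}(\bx,t))-h(0)\}=0$ almost surely; as $h(\delta)>h(0)$ whenever $\delta\ne0$, this means that, a.s., $w(\bx)=0$ or $\bx\trasp(\bbe-\bbe_0)=\eta_0(t)-g(t)$. Conditioning on $t=t_0$ and applying \ref{ass:probaX} to the pair $(\bbe-\bbe_0,\,\eta_0(t_0)-g(t_0))$ forces it to be $\bcero$ for almost every $t_0$, i.e.\ $\bbe=\bbe_0$ and $g=\eta_0$ almost everywhere with respect to the law of $t$, which is the asserted uniqueness. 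The step I expect to be the real obstacle is the strict inequality in the location problem: one must track simultaneously the radius $\epsilon_1$ on which $f_V$ is \emph{strictly} decreasing and the interval $(0,c^\star)$ on which $\phi$ is \emph{strictly} increasing, and check that their overlap carries positive $d\phi$--mass, so that the pointwise tail--probability inequalities integrate to a genuine gap rather than an equality.
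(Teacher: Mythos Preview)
Your proof is correct and follows the same overall skeleton as the paper's: condition on $(\bx,t)$, reduce to the one--dimensional location problem of showing $\esp\,\phi(V+\delta)>\esp\,\phi(V)$ for $\delta\ne0$ when $V$ has a symmetric strictly unimodal density, and then use \ref{ass:probaX} to force $\bbe=\bbe_0$, $g=\eta_0$.

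The one substantive difference is in how the location inequality is obtained. The paper simply invokes Lemma~3.1 of Yohai (1987) as a black box to get \eqref{eq:cotaesp}. You instead prove it from scratch via the layer--cake representation $\phi(w)=\int_0^\infty\indica\{s<|w|\}\,d\phi(s)$ combined with an Anderson--type argument that the mass a symmetric unimodal density assigns to an interval of fixed length is maximised when the interval is centred at the origin, and you then extract strictness by intersecting the region where $f_V$ is \emph{strictly} decreasing with the region where $d\phi$ has positive mass. Your route is longer but fully self--contained and makes explicit exactly which features of \ref{ass:densidad-sym} and \ref{ass:rho} drive the strict inequality; the paper's route is shorter but relies on an external reference.
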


\begin{proof} Let $\Upsilon(\bx,t)=
  \bx\trasp(\bbe-\bbe_0)+g(t)-\eta_0(t)$, then, we have that
$$L(\bbe,\eta, a)= \esp\phi\left(\frac{
 \sigma_0}a\, u - \frac{\Upsilon(\bx,t)}a\right)w(\bx)\,$$
    Denote as
  $\itA_0 \, = \, \left\{(\bx,t) : \Upsilon(\bx,t)= 0 \right\}$
  and $b(\bx,t)= \Upsilon(\bx,t)/a$. Taking into account that the
  errors are independent of the covariates, we have that
  \begin{align*}
   L(\bbe,\eta, a)
    &= \esp\phi\left(u \frac{\sigma_0}{a}\right)  \,
      \esp\left(w(\bx) \indica_{\itA_0}(\bx,t) \right)  \, + \,
      \esp\left\{\esp\left[ \phi\left(u \frac{\sigma_0}{a}
      -b(\bx,t)\right) \Bigg\vert(\bx,t)\right]w(\bx)\indica_{\itA_0^{c}}(\bx,t)\right\}\,.
  \end{align*}
  Note that $\widetilde{u}=u \sigma_0/a$
  also satisfies \ref{ass:densidad-sym},  hence Lemma 3.1 of Yohai (1987)
  together with \ref{ass:rho} imply for all $b \ne 0$ the following
  strict inequality holds
  \begin{equation}
    \esp \left[\phi \left( u \frac{\sigma_0}{a} -b \right) \right] >
    \esp \left[ \phi \left( u \frac{\sigma_0}{a}  \right) \right]\,.
    \label{eq:cotaesp}
  \end{equation} 
  Then, for any  $(\bx,t) \in \mathcal{A}_0^c$, we get
  \begin{align*}
    \esp\left[\phi\left(u \frac{\sigma_0}{a} -b(\bx,t)\right)
     \Big\vert(\bx,t)=(\bx_0,t_0)\right]
    &= \esp\left[\phi\left(u \frac{\sigma_0}{a}
      -b(\bx_0,t_0)\right)\right] > \esp\left[\phi\left(u
      \frac{\sigma_0}{a} \right) \right]
  \end{align*}
  where the   equality follows from the fact that the errors are
  independent of the covariates. 

  Note that \ref{ass:probaX}   immediately implies that
    $\prob(\mathcal{A}_0^c \cap \{w(\bx)\ne 0\} )> 0$. Then, putting all together, we obtain that
  \begin{align*}
    L(\bbe,\eta, a)
    &= \esp\phi\left(u \frac{\sigma_0}{a}\right)  \,
       \esp\left(w(\bx) \indica_{\itA_0}(\bx,t) \right)  \, + \,
      \esp\left\{\esp\left[ \phi\left(u \frac{\sigma_0}{a}
      -a(\bx,t)\right)  \Bigg\vert(\bx,t)\right]w(\bx)\indica_{\itA_0^{c}}(\bx,t)\right\} \\
    &> \esp \phi\left(u \frac{\sigma_0}{a}\right)  \,
       \esp\left(w(\bx) \indica_{\itA_0}(\bx,t) \right) \, + \,
      \esp\left\{\esp\left[ \phi\left(u \frac{\sigma_0}{\sigma}
      \right) \right]w(\bx)\indica_{\itA_0^{c}}(\bx,t)\right\}
      = \esp\left(\phi\left(u\frac{\sigma_0}{a}\right)w(\bx)
      \right)\\
      &> L(\bbe_0,\eta_0, a)\,,
  \end{align*}
  concluding the proof.
\end{proof}
    
   \subsection{The log--Gamma model}{\label{sec:Fisherloggam}  
Under a generalized partially linear model with responses having a gamma distribution, that is, when $y_i|\bx_i\sim
\Gamma(\alpha,\mu_i)$, with $\mu_i=\esp(y_i|(\bx_i,t_i))$ and  $\log(\mu_i)= \bbe_0\trasp\bx_i+\eta_0(t_i)$, the responses can be transformed as  $z_i=\log(y_i)$ so as to deal with the  regression model with asymmetric errors given by (\ref{eq:modelologgamma}), i.e., 
\begin{equation}
z_i=\bx_i\trasp\bbe_0+\eta_0(t_i) +u_i\,,
\label{modeloplm}
\end{equation}
where $u_i$ and $(\bx_i,t_i)$ are independent. Recall that, under a log--Gamma model, the errors are such that   $u_i\sim\log(\Gamma(\alpha,1))$ and their density is strongly unimodal function. 

In this setting, the loss function equals $\rho(z,s, a)  = \phi\left(  {\sqrt{d\left(z -s\right)}}/a\right)$, where   $d(u)= \exp(u)- u -1$.

We will derive Fisher--consistency results that include other skewed distributions with strongly unimodal densities for the errors. For that reason, we will consider the following additional assumption.

\begin{enumerate}[label = \textbf{F\arabic*}]
\setcounter{enumi}{3}
\item\label{ass:densidad-asym} The random variable $u$ has a density function $g_0(u)$ that is strictly unimodal, continuous and $g_0(u)>0$ for all $u$.
\end{enumerate}

The following lemma gives a stronger result than the one stated in Lemma \ref{lema:lema1gam}, since it shows that for any nuisance parameter the true parameters $(\bbe_0,\eta_0)$ minimize the objective function. This result corresponds to the condition required in Section \ref{sec:tasas} to avoid requiring any consistency order  to the nuisance parameter estimator. 

\begin{lemma}{\label{lema:lema2}}
 Let $\itG_0=\{ g: [0, 1] \to \real \mbox{ measurable}\}$ and  consider the partial linear regression model \eqref{modeloplm}, where  the density of the error $u$ satisfies {\ref{ass:densidad-asym}}. Assume that {\ref{ass:rho}} and {\ref{ass:probaX}} hold, then we have  $(\bbe_0,\eta_0)$ is the unique
  minimizer over $\real^p\times \itG_0$  of
  $$L(\bbe,g, a)= \esp\left[\phi\left(
      \frac{\sqrt{d(z - \bx\trasp \bbe -g(t))}}{a}\right)w(\bx)\right]$$
\end{lemma}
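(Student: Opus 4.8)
The plan is to follow the proof of Lemma~\ref{lema:lema1} almost verbatim, the only new ingredient being that the symmetry--based inequality used there (Lemma~3.1 of Yohai, 1987) must be replaced by the analogous inequality for strongly unimodal errors from Bianco \textsl{et al.}~(2005). First, I would write $\Upsilon(\bx,t)=\bx\trasp(\bbe-\bbe_0)+g(t)-\eta_0(t)$. From model \eqref{modeloplm} one has $z-\bx\trasp\bbe-g(t)=u-\Upsilon(\bx,t)$, and since $u$ is independent of $(\bx,t)$, conditioning on $(\bx,t)$ gives
$$ L(\bbe,g,a)=\esp\left[w(\bx)\,h_a\!\left(\Upsilon(\bx,t)\right)\right],\qquad h_a(c)=\esp\left[\phi\!\left(\frac{\sqrt{d(u-c)}}{a}\right)\right]. $$
Splitting the outer expectation over $\itA_0=\{(\bx,t):\Upsilon(\bx,t)=0\}$ and its complement, exactly as in Lemma~\ref{lema:lema1}, reduces the whole statement to the pointwise claim that $h_a(c)>h_a(0)$ for every $c\neq0$ and every $a>0$.

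I expect this pointwise inequality to be the main obstacle, since it is precisely the point where the special structure of the error law enters. Setting $\Phi_a(s)=\phi(\sqrt{d(s)}/a)$, one first records that, by \ref{ass:rho} and the shape of $d$, $\Phi_a$ is continuous, $\Phi_a(0)=0<\Phi_a(s)$ for $s\neq0$, non--increasing on $(-\infty,0]$, non--decreasing on $[0,\infty)$, and strictly monotone on a one--sided neighbourhood of $0$ on each side; thus $\Phi_a$ is an asymmetric, bowl--shaped loss whose unique minimiser is $0$, which under \ref{ass:densidad-asym} (for the log--Gamma error) is the mode of $g_0$. Moreover, from \eqref{eq:gammaden}, $-\log g_0(u)=\alpha\,d(u)+\mbox{const}$, so the error density is proportional to $\exp(-\alpha\,d(u))$; this alignment of the density with the deviance is exactly the setting of Lemma~1 in Bianco \textsl{et al.}~(2005), whose argument yields that $\esp[\Phi_a(u-c)]$ is uniquely minimised at $c=0$ (equivalently, that no bias--correction term is needed for such strongly unimodal densities). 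Since for each fixed $a>0$ the rescaled function $\phi(\cdot/a)$ is again a $\rho$--function in the sense of \ref{ass:rho}, that argument applies for all $a>0$, which is the strengthening over Lemma~\ref{lema:lema1gam} we are after; the same conclusion holds for any other skewed error provided $g_0$ is strongly unimodal with mode at $0$, so that the density mode coincides with the minimiser of $d$.

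Granting the pointwise inequality, the conclusion follows as in Lemma~\ref{lema:lema1}: on $\itA_0$ one has $h_a(\Upsilon(\bx,t))=h_a(0)=\esp[\Phi_a(u)]$, whereas on $\itA_0^{c}$, using once more the independence of $u$ and $(\bx,t)$, $h_a(\Upsilon(\bx,t))>h_a(0)$; hence
$$ L(\bbe,g,a)\ \ge\ \esp\left[w(\bx)\right]h_a(0)\ =\ \esp\left[\phi\!\left(\frac{\sqrt{d(u)}}{a}\right)w(\bx)\right]\ =\ L(\bbe_0,\eta_0,a), $$
with equality only when $\prob(\itA_0^{c}\cap\{w(\bx)>0\})=0$. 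It then remains to rule the latter out unless $(\bbe,g)=(\bbe_0,\eta_0)$ up to a $Q$--null set: if $\Upsilon(\bx,t)=0$ almost surely on $\{w(\bx)>0\}$, then for almost every $t_0$ one has $\bx\trasp(\bbe-\bbe_0)=\eta_0(t_0)-g(t_0)$ a.s.\ on $\{w(\bx)>0\}$, i.e.\ $\prob\big(\bx\trasp(\bbe-\bbe_0)=c_0\ \cup\ w(\bx)=0\mid t=t_0\big)=1$ with $c_0=\eta_0(t_0)-g(t_0)$, and \ref{ass:probaX} forces $(\bbe-\bbe_0,c_0)=\bcero$, so $\bbe=\bbe_0$ and $g=\eta_0$ $Q$--a.e. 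This gives uniqueness and completes the argument.
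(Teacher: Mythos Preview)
Your proof is correct and follows essentially the same approach as the paper's own proof: both reduce to the pointwise inequality $\esp[\phi(\sqrt{d(u-c)}/a)]>\esp[\phi(\sqrt{d(u)}/a)]$ for $c\ne 0$, invoke Lemma~1 of Bianco \textsl{et al.}~(2005) to establish it under \ref{ass:densidad-asym}, and then conclude via \ref{ass:probaX} exactly as in Lemma~\ref{lema:lema1}. Your version is in fact slightly more detailed than the paper's, spelling out the bowl shape of $\Phi_a$ and the final uniqueness step explicitly.
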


\begin{proof} As above, let $\Upsilon(\bx,t)=
  \bx\trasp(\bbe-\bbe_0)+g(t)-\eta_0(t)$ and
  $\itA_0 \, = \, \left\{(\bx,t) : \Upsilon(\bx,t)=   0 \right\}$. Then, we
  have that
  $$ L(\bbe,g, a)= \esp
  \left(\phi\left(\frac{\sqrt{d(u + \Upsilon(\bx,t))}}{a}\right)\, w(\bx)
  \right)\,.$$
Using that the errors are independent of the
  covariates, we conclude that
  \begin{equation}
   L(\bbe,g, a) = \esp
  \left(\phi\left(\frac{\sqrt{d(u)}}{a}\right)\right)
  \esp\left(w(\bx) \indica_{\itA_0}(\bx,t) \right) + \esp \left \{\esp
    \left[\phi\left(\frac{\sqrt{d(u+\Upsilon(\bx,t))}}{a}\right) \Bigg |
      (\bx,t)\right] w(\bx)\,\indica_{\itA_0^c}(\bx,t) \right\}\,.
      \label{eq:expL}
      \end{equation}
  Taking into account that the errors verify \ref{ass:densidad-asym}, from Lemma 1 in
  Bianco \textsl{et al.} (2005) we may bound the second term in \eqref{eq:expL}. Effectively, for any
  $(\bx,t)\in \itA_0^c$ and for any fixed $a>0$, we get
  $$ \esp 
  \left(\phi\left(\frac{\sqrt{d(u+\Phi(\bx,t))}}{a}\right) \Bigg |
    (\bx,t)\right) > \esp  \left(\phi\left(\frac{\sqrt{d(u)}}{a}\right)
    \Bigg | (\bx,t)\right) = \esp 
  \left(\phi\left(\frac{\sqrt{d(u)}}{a}\right) \right)\,,$$ 
  where the last equality follows from the fact that the errors are
  independent of the covariates. Using \ref{ass:probaX}, we get that the
  strict inequality occurs on a set with positive probability and the
  result follows as in Lemma \ref{lema:lema1}.
\end{proof}
\setcounter{equation}{0}
\renewcommand{\theequation}{B.\arabic{equation}}
\section{Appendix B}
Throughout this section we will denote as $\|\rho\|_{\infty}=\sup_{y \in \real, u\in \real, a\in \itV} \rho(y,u,a)$ and  $\|w\|_{\infty}=\sup_{\bx\in \real^p} w(\bx)$.

\subsection{Proof of Theorem \ref{thm:consistency}.}\label{proofcons}
Let $V_{\bbech,g,a}=\rho\left(y,\bx\trasp\bbe+g(t),a\right)w(\bx) $ and denote as $P$ the probability measure of $(y_1,\bx_1,t_1)$ and as $P_n$
its corresponding empirical measure. Then, $L_n(\bbe,g,a)=P_n V_{\bbech,g,a}$ and $L(\bbe,g,a)=P V_{\bbech,g,a}$. 

Recall that $ \itM_n(\itT_n,\ell)=\left\{\sum_{i=j}^{k_n}\lambda_j B_j:   \quad\lambda_1\leq \dots \leq \lambda_{k_n}\right\} =\left\{\bla\trasp \bB: \bla\in \itL_{k_n} \right\}$. The consistency of $\wkappa$ entails that given any neighbourhood $\itV$ of $\kappa_0$, there exists a null set $\itN_\itV$, such that for $\omega\notin \itN_\itV$, there exists $n_0\in \natu$, such that for all $n\ge n_0$ we have that $ \wkappa\in \itV$.

The proof follows similar steps as those used in the proof of Theorem 5.7 of van der Vaart (1998). Let us begin showing that 
\begin{equation}
  \label{eq:convunif}
  A_n=\sup_{\bbech\in \real^p, g\in\itM_n(\itT_n,\ell), a \in \itV}  |L_n(\bbe,g, a)-L(\bbe,g, a)|\convpp 0\,.
\end{equation}
Note that $A_n=\sup_{f\in\itF_n} (P_n-P)f$, where $\itF_n$ is defined in \textbf{C4}. Furthermore, \textbf{C1} entails that $\sup_{f\in   \itF_n}|f|=\|\rho\|_\infty\|w\|_\infty$ and \textbf{C4} and the fact that   $k_n = O(n^\nu)$ with $\nu < 1/(2r)<1$ imply that
 $$\frac 1n \log N(\epsilon, \itF_n, L_1(P_n))=O_\prob(1)\, \frac{k_n}{n}\log\left(\frac 1\epsilon\right)\convprob 0\,.$$ 
Hence, we get that \eqref{eq:convunif} holds (see, for instance, exercise 3.6 in van der Geer, 2000 with $b_n=\max(1, \|\rho\|_{\infty} \|w\|_{\infty})$).

Since $L(\bthe_0, \kappa_0)=\inf_{\bbech\in \real^p, g\in \itG}L(\bbe,g, \kappa_0)$, where $\bthe_0=(\bbe_0,\eta_0)$, we have that 
\begin{equation}
0\le L(\wbthe, \kappa_0)-L(\bthe_0, \kappa_0)= \sum_{j=1}^3 A_{n,j}\,,
\label{desigL}
\end{equation}
 with $A_{n,1}=L(\wbthe, \wkappa)-L_n(\wbthe, \wkappa)$,
$A_{n,2}=L_n(\wbthe, \wkappa)-L(\bthe_0, \kappa_0)$ and $A_{n,3}=L(\wbthe, \kappa_0)-L(\wbthe, \wkappa)$. Noting that $|A_{n,1}|\le A_n$, we obtain that $A_{n,1}=o_{\as}(1)$. On the other hand, since $L(\wbthe, a)=L^{\star} (\wbbe, \wblam, a)$ the equicontinuity of $L^{\star}$ stated in \textbf{C1} and the consistency of $\wkappa$ entails that  $A_{n,3}=o_{\as}(1)$.

We will now bound $A_{n,2}$.  Using Lemma A1 of Lu \textsl{et al.} (2007), we get that there exists $g_n\in \itM_n(\itT_n,\ell)$ with $\ell\ge r+2$, such that $\|g_n-\eta\|_{\infty}=O(n^{-r\nu} )$, for $1/(2r +2) < \nu < 1/(2r)$. Denote $\bthe_n=(\bbe, g_n)$ and let $S_{n,1}=(P_n-P)V_{\bbech,g_n, \wkappa}$
and $S_{n,2}=L(\bthe_n, \wkappa)-L(\bthe_0, \kappa_0)$. Note that $S_{n,1}\le A_n$, so that from \eqref{eq:convunif}, we get that $S_{n,1}\convpp 0$.
On the other hand, if we write $S_{n,2}=\sum_{j=1}^2 S_{n,2}^{(j)}$ where $S_{n,2}^{(1)}=L(\bthe_n, \wkappa)- L(\bthe_n, \kappa_0)$ and $S_{n,2}^{(2)}=L(\bthe_n, \kappa_0)-L(\bthe_0, \kappa_0)$, the continuity of $\rho$ together with the fact that $\|g_n-\eta\|_{\infty}\to 0$ and the dominated convergence theorem entail that $S_{n,2}^{(2)}\to 0$, while the continuity and boundedness of $\rho$ together with  the consistency of $\wkappa$  leads to $S_{n,2}^{(1)}=o_{\as}(1)$. Hence, $S_{n,j}=o_{\as}(1)$ for $j=1,2$.

Using that $\wbthe$ minimizes $L_n$ over $\real^p\times \itM_n(\itT_n,\ell)$ we obtain that
\begin{equation}
A_{n,2}=  L_n(\wbthe ,\wkappa)-L(\bthe_0, \kappa_0) \leq L_n(\bthe_n, \wkappa)-L(\bthe_0, \kappa_0)=S_{n,1}+S_{n,2} \,.
  \label{eq:desigualdad}
\end{equation} 
Hence, from \eqref{desigL} and \eqref{eq:desigualdad} and using that $A_{n,j}=o_{\as}(1)$, for $j=1,3$ and $  S_{n,j}= o_{\as}(1)$, for $j=1,2$,  we conclude that
$$0\le L(\wbthe, \kappa_0)-L(\bthe_0, \kappa_0)=\sum_{j=1}^3 A_{n,j}\le  o_{\as}(1) $$
so that $ L(\wbthe, \kappa_0)\to L(\bthe_0, \kappa_0)$. The fact that
$\inf_{\pi(\wtbthech,\bthech_0)>\epsilon}L(\wtbthe,\kappa_0)>L(\bthe_0,\kappa_0)$ entails that $\pi(\wtheta,\theta)\convpp 0$, concluding the proof. \square

\subsection{Proof of Theorem {\ref{thm:rate}}}\label{proofrate}
To prove Theorem {\ref{thm:rate} under both sets of assumptions, we will state the common steps at the beginning and we then continue the proof when \textbf{C5$^\star$} or \textbf{C5$^{\star\star}$} hold.

\vskip0.2in
  
We denote $ \Theta_n = \real^p\times \itM_n(\itT_n,\ell)\cap \{\bthe=(\bbe,g)\in \Theta: \pi(\bthe,\bthe_0)<\epsilon_0\}$, where $\Theta =\real^p\times \itG$. Note that, except for a null probability set, $\wbthe \in \Theta_n$, for $n$ large enough. As in the proof of Theorem \ref{thm:consistency}, let $g_n\in \itM_n(\itT_n,\ell)$ with $\ell\ge r+2$, $g_n(t)=\bla_n\trasp \bB(t)$, be such that $\|g_n-\eta_0\|_{\infty}=O(n^{-r\nu} )$, for $1/(2r +2) < \nu < 1/(2r)$ and denote $\bthe_{0,n} = (\bbe_0,g_n)$. 

In order to get the convergence rate  of our estimator $\wbthe = (\wbbe,\weta)$ we will apply Theorem 3.4.1 of van der Vaart and Wellner (1996). For that purpose, following the notation in that Theorem, denote as $M(\bthe)= - L(\bthe, \wkappa)$ and $\EME_n(\bthe)=- L_n(\bthe, \wkappa)$ and for $\bthe\in \Theta_n$, denote $d_n(\bthe, \bthe_0)= \pi_{\prob}(\bthe, \bthe_0)$. Note that the function $M$ is random, due to the nuisance parameter estimator $\wkappa$. Let $\delta_n=A\|\eta_0-g_n\|_{{\itF}}$ , where $A=4\,\sqrt{(C_0 /\|w\|_{\infty}+A_0)/C_0}$ with $A_0= \|w\|_{\infty}   \|\chi\|_{\infty}/2$ and $C_0$ given in \textbf{C8}.

Using that $|(L_n(\bthe, \wkappa)- L(\bthe,\wkappa))-(L_n(\bthe_{0,n}, \wkappa)- L(\bthe_{0,n},\wkappa)) |=  | (\EME_n-M)(\bthe)- (\EME_n-M)(\bthe_{0,n})|$, to make use of Theorem 3.4.1 of van der Vaart and Wellner (1996), we have to show that there exists a function $\phi_n$ such that $\phi_n(\delta)/\delta^\nu$ is decreasing on $(\delta_n, \infty)$ for some $\nu<2$ and that  for any $\delta>\delta_n$, 
\begin{eqnarray}
\sup_{ \bthech\in \Theta_{n,\delta}} L(\bthe_{0,n}, \wkappa)- L(\bthe,\wkappa)=\sup_{ \bthech\in \Theta_{n,\delta}} M(\bthe)- M(\bthe_{0,n}) &\lesssim & -\delta^2
\label{aprobar1}\\
\esp^{*} \sup_{  \bthech\in \Theta_{n, \delta}} \sqrt{n} \left |(L_n(\bthe, \wkappa)- L(\bthe,\wkappa))-(L_n(\bthe_{0,n}, \wkappa)- L(\bthe_{0,n},\wkappa)) \right |  & \lesssim &\phi_n(\delta)
\label{aprobar2}\\
d_n(\wbthe, \bthe_{0,n}) & \convprob & 0
\label{aprobar3}
\end{eqnarray}
where the symbol $\lesssim$ means \textit{less or   equal up to a constant},   $\esp^{*}$ stands for the outer expectation and $\Theta_{n,\delta}=\{\bthe\in \Theta_n: \delta / 2  <  d_n(\bthe,\bthe_{0,n}) \leq \delta\}$.

Assumption \textbf{C8} and the fact that $\wkappa\convpp \kappa_0$ entails that, except for a null probability set,  for any $\bthe\in \Theta_n$, $L(\bthe, \wkappa)-L(\bthe_0, \wkappa)\ge C_0\,\pi_{\prob}^2(\bthe,\bthe_0)$. On the other hand, using \textbf{C6}, we get that
\begin{eqnarray*}
 0\le  L(\bthe_{0,n}, a)-L(\bthe_0, a) &=&   \esp\left\{\esp\left[w(\bx )\Psi(y ,\bx \trasp\bbe_0+\eta_0(t ),a) \left(g_n(t )-\eta_0(t )\right)\left|(\bx ,t )\right.\right]\right\}\\
        & +&\frac 1{2}\;   \esp\left[ w(\bx )\, \chi(y ,\bx \trasp\bbe_0+\wteta(t ), a) \left(g_n(t )-\eta_0(t )\right)^2    \right] \\
  &=& \frac 1{2}\;   \esp\left[ w(\bx )\,\chi(y ,\bx \trasp\bbe_0+\wteta(t ), a) \left(g_n(t )-\eta_0(t) \right) ^2  \right] \\
  &\leq  &\frac 1{2}\;  \|w\|_{\infty}   \|\chi\|_{\infty}\esp\left(  g_n(t)-\eta_0(t)  \right)^2  =  A_0\, \|g_n-\eta_0\|_2^2 {\le  A_0\, \|g_n-\eta_0\|_{ \itF}^2}=O(n^{-2\,r\nu} )\,,
\end{eqnarray*}
where $A_0= \|w\|_{\infty}   \|\chi\|_{\infty}/2$ and $\wteta(t)$ is an intermediate value between $\eta_0(t)$ and $g_n(t)$.
Thus, using that $d_n^2(\bthe,\bthe_{0,n})\le 2 d_n^2(\bthe,\bthe_0)+ 2 d_n^2(\bthe_{0,n},\bthe_0) \le  2 d_n^2(\bthe,\bthe_0) + 2  \|w\|_{\infty}\,\|g_n-\eta_0\|_{2}^2  \le  2 d_n^2(\bthe,\bthe_0) + 2  \|w\|_{\infty}\,\|g_n-\eta_0\|_{{\itF}}^2$ and that  $\delta / 2  <  d_n(\bthe,\bthe_{0,n}) $ we obtain that
\begin{eqnarray*}
 L(\bthe, \wkappa)-  L(\bthe_{0,n}, \wkappa) & \ge  & C_0\,d_n^2(\bthe,\bthe_0)-  A_0\, \|g_n-\eta_0\|_{{\itF}}^2 \ge \frac{C_0}2 d_n^2(\bthe,\bthe_{0,n}) - \left( \frac{C_0}{\|w\|_{\infty}}+A_0\right) \|g_n-\eta_0\|_{{\itF}}^2 \\
 &\ge&  \frac{C_0}8 \delta^2 - \frac{1}{A^2}  \left( \frac{C_0}{\|w\|_{\infty}}+A_0\right) \delta_n^2 =\frac{C_0}8 \delta^2- \frac{C_0}{16} \delta_n^2\ge \frac{C_0}{16} \delta^2\,,
 \end{eqnarray*}
concluding the proof of \eqref{aprobar1}.

We have now to  find $\phi_n(\delta)$ such that $\phi_n(\delta)/\delta$ is decreasing in $\delta$ and \eqref{aprobar2} holds.  
Note that from the consistency of $\wkappa$, we have that, with probability one for $n$ large enough
\begin{eqnarray*}
 \sqrt{n} \left |(L_n(\bthe, \wkappa)- L(\bthe,\wkappa))\right.&& \left.-\;(L_n(\bthe_{0,n}, \wkappa)- L(\bthe_{0,n},\wkappa)) \right | \le \\ 
 &&\sup_{  a \in \itV} \sqrt{n} \left |(L_n(\bthe, a)- L(\bthe,a))-(L_n(\bthe_{0,n}, a)- L(\bthe_{0,n}, a)) \right |\,.
 \end{eqnarray*}
Define the class of functions
$$\itF_{n,\delta} = \{V_{\bthech, a}-V_{\bthech_{0,n}, a}: \frac{\delta}2 \leq d_n(\bthe,\bthe_{0,n}) \leq \delta\,, \bthe\in \Theta_n\,, \, a\in \itV\}= \{V_{\bthech, a}-V_{\bthech_{0,n}, a}:  \bthe\in \Theta_{n,\delta}\,, \, a\in \itV\}\,,$$ 
with $V_{\bthech, a}=\rho\left(y,\bx\trasp\bbe+g(t),a\right)w(\bx) $, for $\bthe=(\bbe,g)$. The inequality \eqref{aprobar2} involves an empirical process indexed by $\itF_{n,\delta}$, since
$$\esp^{*} \sup_{  \bthech\in \Theta_{n,\delta}} \sqrt{n} \left |(L_n(\bthe, \wkappa)- L(\bthe,\wkappa))-(L_n(\bthe_{0,n}, \wkappa)- L(\bthe_{0,n},\wkappa)) \right | \le \esp^{*} \sup_{f\in \itF_{n,\delta}} \sqrt{n} |(P_n-P) f|\,.$$
For any $f\in \itF_{n,\delta} $ we have that $\|f\|_{\infty} \le A_1 = 2 \|\rho\|_{\infty} \|w\|_{\infty}$. Furthermore, if $A_2= \|\psi\|_{\infty}  \|w\|_{\infty}$ using that
$$|V_{\bthech, a}-V_{\bthech_{0,n}, a}| 
\le \|\psi\|_{\infty} w(\bx) |\bx\trasp(\bbe-\bbe_{0}) + g(t)-g_n(t)|\,,$$
and the fact that $\pi_{\prob}(\bthe,\bthe_{0,n})=d_n(\bthe,\bthe_{0,n})\le \delta$,  we get that
$$P f^2\le \|\psi\|_{\infty} \esp\left(w^2(\bx) \left[\bx\trasp(\bbe-\bbe_{0})+g(t)-g_{n}(t)\right]^2\right)\le A_2\,  \pi_{\prob}^2(\bthe,\bthe_{0,n})\le  A_2\, \delta^2\,.$$
  Lemma 3.4.2 van der Vaart and Wellner (1996) leads to
$$\esp^{*} \sup_{f\in \itF_{n,\delta}} \sqrt{n} |(P_n-P) f|\le J_{[\;]}\left( A_2^{1/2}\delta,\itF_{n,\delta}, L_2(P)\right) \left ( 1+ A_1 \frac{J_{[\;]}(A_2^{1/2}\,\delta,\itF_{n,\delta}, L_2(P))}{A_2 \delta^2 \; \sqrt{n}}   \right ) \,,$$ 
where $J_{[\;]}(\delta, \itF, L_2(P)) =\int_0^\delta \sqrt{1+ \log N_{[\;]}(\epsilon, \itF, L_2(P)) } d\epsilon$ is the bracketing integral. 

\vskip0.2in
 
a) Assume now that   \textbf{C5$^{\star}$} holds and note that for any $\bthe=(\bbe,g) \in \Theta_{n,\delta}$, $g$ can be written as $g=\bla\trasp \bB$ for some $\bla\in \itL_{k_n}$, so 
 \begin{eqnarray*}
d_n^2(\bthe, \bthe_{0,n})& = &   \esp\left(w(\bx) \left[\bx\trasp(\bbe-\bbe_{0})+(\bla-\bla_n)\trasp \bB(t)\right]^2\right) \,.
\end{eqnarray*} 
Hence,     $\itF_{n,\delta}\subset \itG_{n,c, \blach_n}$ with $c= \delta$  and the bound given in \textbf{C5$^{\star}$} leads to
$$N_{[\;]}\left( \epsilon,\itF_{n,\delta}, L_2(P)\right)\le C_2  \left(\frac{\delta }{\epsilon}\right)^{k_n+p+1}\,.$$
This implies that 
$$J_{[\;]}( A_2^{1/2}\delta,\itF_{n,\delta}, L_2(P)) \lesssim \delta   \sqrt{k_n+p+1}\,.$$
If we denote $q_n = k_n + p+1$ we obtain that for some constant $A_3$ independent of $n$ and $\delta$,
$$\esp^{*} \sup_{\bthech\in \Theta_{n,\delta}} |\mathbb{G}_n V_{\bthech_{0,n}, \kappa_0}-\mathbb{G}_nV_{\bthech, \kappa_0}| \leq A_3\,\left[\delta \, q_n^{1/2}   + \frac{ q_n  }{ \sqrt{n}}\right]\,.  $$
Choosing
$$\phi_n(\delta)=\delta \, q_n^{1/2}   + \frac{ q_n   }{ \sqrt{n}} \,,$$
we have that $\phi_n(\delta)/\delta$ is decreasing in $\delta$, concluding the proof of \eqref{aprobar2}. 
The fact that $\pi(\wbthe, \bthe_0)\convpp 0$, entails that $\pi_\prob(\wbthe, \bthe_0)\convpp 0$ which together with  $\pi_\prob(\bthe_{0,n}, \bthe_0)\to 0$, leads to \eqref{aprobar3}.

Let  $\gamma_n= O(n^{\min(r\nu,(1-\nu)/2)})$, then   $\gamma_n \lesssim \delta_n^{-1}$, where $\delta_n=A\|\eta_0-g_n\|_{{\itF}}=O(n^{-r\nu})$. We have to show that $\gamma_n^2\phi_n \left(1/{\gamma_n}\right)\lesssim \sqrt{n}$.
 Note that 
 $$\gamma_n^2\phi_n \left(\frac{1}{\gamma_n}\right)=\gamma_n  q_n^{1/2}+ \gamma_n^2\, \frac{ q_n  }{\sqrt{n}} =\sqrt{n}\; a_n(1+a_n)\,\,,$$ 
where $a_n=\gamma_n  q_n^{1/2}/\sqrt{n}$. Hence, to derive that $\gamma_n^2\phi_n \left(1/{\gamma_n}\right)\lesssim \sqrt{n}$, it is enough to show that $a_n=O(1)$, which follows easily since  $k_n=O(n^{\nu})$ and $\gamma_n= O(n^\varsigma)$ with $\varsigma=\min(r\nu,(1-\nu)/2)$. 

Finally, the condition $\EME_n(\wbthe)\ge \EME_n(\bthe_{0,n})-O_{\prob}(\gamma_n^{-2})$ required by Theorem 3.4.1 of van der Vaart and Wellner (1996)  is   trivially fulfilled because $\wbthe_n$ minimizes $L_n(\bthe, \wkappa)$. Hence,   we get that $\gamma_n^2 d_n^2(\bthe_{0,n},\wbthe) = O_{\prob}(1)$.

On the other hand, $d_n(\bthe_{0,n},\bthe_0)\le \|w\|_{\infty}^{1/2} \|g_n-\eta_0\|_{\infty}=O(n^{-r\nu})\le \gamma_n$, which together with  $\gamma_n^2 d_n^2(\bthe_{0,n},\wbthe) = O_{\prob}(1)$ and the triangular inequality leads to  $\gamma_n^2 d_n^2(\bthe_{0},\wbthe) = O_{\prob}(1)$, concluding the proof.

\vskip0.2in

b) We will assume now that \textbf{C5$^{\star\star}$} holds. 
Therefore,   using that any $f\in \itF_{n,\delta}$ can be written as $f=f_1-f_2$ with $f_j\in \itF_{n,\epsilon_0}^\star$ and the bound given in \textbf{C5$^{\star\star}$}, we get that    
$$N_{[\;]}\left( \epsilon,\itF_{n,\delta}, L_2(P)\right)\le C_2^2  \frac{1}{\epsilon^{2(k_n+p+1)}}\,.$$
This implies that 
$$J_{[\;]}( A_2^{1/2}\delta,\itF_{n,\delta}, L_2(P)) \lesssim \delta \log\left(\frac{1}{\delta}\right) \sqrt{k_n+p+1}\,.$$
If we denote $q_n = k_n + p+1$ we obtain
$$\esp \sup_{\bthech\in \Theta_{n,\delta}} |\mathbb{G}_n V_{\bthech_{0,n}, \kappa_0}-\mathbb{G}_nV_{\bthech, \kappa_0}| \leq A\left(q_n^{1/2} \delta \log\left(\frac{1}{\delta}\right) + n^{-1/2} q_n \left[ \log\left(\frac{1}{\delta}\right)\right]^2 \right)\,.$$
Choosing
$$\phi_n(\delta)=q_n^{1/2} \delta \log\left(\frac{1}{\delta}\right) + n^{-1/2} q_n \left[ \log\left(\frac{1}{\delta}\right)\right]^2 \,,$$
we have that $\phi_n(\delta)/\delta$ is decreasing in $\delta$.

Therefore, from Theorem 3.4.1 of van der Vaart and Wellner (1996), we  conclude that $\gamma_n^2 d_n^2(\bthe_{0,n},\wbthe) = O_{\prob}(1)$, where $\gamma_n$ is any sequence satisfying $\gamma_n \lesssim \delta_n^{-1}$  with $\delta_n=\pi(\bthe_0,\bthe_{0,n} )=O(n^{-r\nu})$  and $\gamma_n^2\phi_n \left({1}/{\gamma_n}\right) \leq \sqrt{n}$. The first condition, entails that $\gamma_n \leq O(n^{r\nu})$. The second one, implies that 
$$\gamma_n^2 \left (q_n^{1/2}\gamma_n^{-1} \log(\gamma_n)+ q_n n^{-1/2}[\log(\gamma_n)]^2 \right ) \leq n^{1/2}\,,$$
so  using that $k_n=O(n^{\nu})$ we get that $\gamma_n  \log(\gamma_n)\leq O(n^{(1-\nu)/2})$. 
Finally, the condition $\EME_n(\wbthe)\ge \EME_n(\theta_0)-O_{\prob}(r_n^{-2})$ required by Theorem 3.4.1 of van der Vaart and Wellner (1996)  is   trivially fulfilled because $\wbthe_n$ minimizes $L_n(\bthe, \wkappa)$.

On the other hand, $d_n(\bthe_{0,n},\bthe_0)\le \|w\|_{\infty}^{1/2} \|g_n-\eta_0\|_{\infty}=O(n^{-r\nu})\le \gamma_n$, which together with  $\gamma_n^2 d_n^2(\bthe_{0,n},\wbthe) = O_{\prob}(1)$ and the triangular inequality leads to  $\gamma_n^2 d_n^2(\bthe_{0},\wbthe) = O_{\prob}(1)$. \square



\vskip0.1in
\small
\noi \textbf{Acknowledgements.}    This research was partially supported by Grants  \textsc{pip} 112-201101-00742 from \textsc{conicet},  \textsc{pict} 2014-0351  from \textsc{anpcyt} and  20020130100279\textsc{ba} and 20020120200244\textsc{ba}  from the Universidad de Buenos Aires  at Buenos Aires, Argentina.

\section*{References}
\footnotesize
 
\begin{description}
\item 
  \'Alvarez, E. and Yohai, J. (2012). $M-$estimators   for isotonic regression. \textsl{J. Statist. Plann. Inf.},   \textbf{142}, 2241-2284.

\item  Bianco, A.  and  Boente, G.  (2004). Robust estimators in semiparametric partly linear regression models. \textsl{J. Statist. Planning and Inference} \textbf{122}, 229-252.

\item Bianco, A, Boente, G. and Rodrigues, I. (2013a). Resistant estimators in Poisson and Gamma models with missing responses and an application to outlier detection. \textsl{J. Multivar. Anal.}, \textbf{114},  209-226.

\item Bianco, A, Boente, G. and Rodrigues, I. (2013b)    Robust tests in generalized linear models with missing responses.  \textsl{Comp. Statist. . Data Anal.}, \textbf{65}, 80-97.

\item
  Bianco, A., Garc\'\i a Ben, M. and Yohai, V. (2005). Robust   estimation for linear regression with asymmetric   errors. \textsl{Canad. J. Statist.}, \textbf{33}, 511-528.
  
  \item
   Bianco, A.  and Yohai, V.  (1996). Robust estimation in the logistic regression model. \textsl{Lecture Notes in Statistics}, \textbf{109}, 17-34. Springer--Verlag, New York.

\item 
  Boente, G., He, X. and Zhou, J. (2006). Robust estimates in   generalized partially linear models. \textsl{Ann. Statist.},  \textbf{34}, 2856-2878.

\item
  Boente, G. and Rodr\'{\i}guez, D. (2010). Robust inference   in generalized partially linear models. \textsl{Comput. Statist.     Data Anal.}, \textbf{54}, 2942-2966. 

\item
  Cantoni, E. and Ronchetti, E. (2001). Robust   inference for generalized linear models.   \textsl{J. Amer. Statist. Assoc.}, \textbf{96}, 1022-1030.

\item 
  Croux, C. and Haesbroeck, G. (2002). Implementing the   Bianco and Yohai estimator for logistic regression.   \textsl{Comp. Statist.  Data Anal.}, \textbf{44}, 273-295.

\item  
  Du, J., Sun, Z. and Xie, T. (2013). $M-$estimation   for the partially linear regression model under monotonic   constraints.  \textsl{Statist.  Prob. Letters}, \textbf{83},   1353-1363.
  
\item
  H\"ardle, W., Liang, H. and Gao,   J. (2000). \textsl{Partially Linear Models}. Physica-Verlag.

\item
  He, X. and Shi, P.  (1996). Bivariate tensor--product   $B-$spline in a partly linear model. \textsl{J. Multivariate   Anal.},  \textbf{58}, 162-181.

\item He, X. and Shi, P.  (1998). Monotone B-Spline smoothing. \textsl{J. Amer. Statist. Assoc.}, \textbf{93}, 643-650.

\item 
  He, X., Zhu, Z. and Fung, W. (2002). Estimation in a   semiparametric model for longitudinal data with unspecified   dependence structure. \textsl{Biometrika}, \textbf{89}, 579-590.

\item
Heritier, S., Cantoni, E., Copt, S. and Victoria--Feser, M.P. (2009). \textsl{Robust Methods in Biostatistics}. Wiley Series in Probability and Statistics. Wiley.

\item
  Huang, J. (2002). A note on estimating a partly   linear model under monotonicity constraints. \textsl{J. Statist. Plann. Inf.}, \textbf{107}, 343-351.

\item K\"unsch, H., Stefanski, L. and Carroll, R. (1989). Conditionally unbiased bounded influence estimation in general regression models with applications to generalized linear models. \textsl{J. Amer. Statist. Assoc.} \textbf{84},  460-466.

\item
  Lu, M., Zhang, Y. and Huang, J. (2007).   Estimation of the mean function with panel count data using  monotone polynomial splines. \textsl{Biometrika}, \textbf{94},   705-718.

\item 
  Lu, M. (2010). Spline-based sieve maximum   likelihood estimation in the partly linear model under   monotonicity constraints. \textsl{J. Multivar. Anal.}, \textbf{101},   2528-2542. 

\item Lu, M. (2015). Spline estimation of generalised monotonic regression. \textsl{J. Nonpar. Statist.}, \textbf{27}, 19-39.

\item  McCullagh, P. and Nelder, J. (1989).  \textsl{Generalized Linear Models}. (2nd ed.) London: Champman and Hall. 

\item Marazzi, A. and Yohai, V. (2004). Adaptively truncated maximum likelihood regression with asymmetric errors. \textsl{J. Statist. Plann. Inference}. \textbf{122}, 271-291.

\item Maronna  R., Martin  D.  and Yohai V. (2006).  \textsl{Robust statistics: Theory and methods} , Wiley, New York.

\item
  Ramsay, J. (1988). Monotone regression splines in  action. \textsl{Statistical Science}, \textbf{3}, 425--441.
  
\item
  Schumaker,L. (1981). \textsl{Spline Functions:    Basic Theory}, Wiley, New York.

\item
  Schwarz, G. (1978). Estimating the dimension of a  model. \textsl{Ann. Statist.}, \textbf{6}, 461-464.

  \item 
  Shen, X., and Wong, W. H. (1994)  Convergence rate of sieve estimates. \textsl{Ann. Statist.}, \textbf{22}, 580-615.

\item Stefanski, L., Carroll, R. and Ruppert, D. (1986). Bounded score functions for generalized linear models. \textsl{Biometrika} \textbf{73}, 413-424.

\item
  Sun,Z., Zhang,Z. and Du,J. (2012).  Semiparametric  analysis of isotonic errors--in--variables regression models  with missing response. \textsl{Communications in Statistics:    Theory and Methods}, \textbf{41}, 2034--2060.
 
\item
  Van der Geer, S. (2000). \textsl{Empirical    Processes in $M-$Estimation},  Cambridge University Press.

\item
  van der Vaart, A. (1998). \textsl{Asymptotic    Statistics}, Cambridge Series in Statistical and Probabilistic  Mathematics. Cambridge University Press.
  
\item
  van der Vaart, A.  and  Wellner, J. (1996). \textsl{Weak Convergence and Empirical Processes. With Applications to Statistics}. Springer--Verlag, New York.
\end{description}

\end{document}